%%\documentclass[preprint,12pt]{elsarticle}

%% Use the option review to obtain double line spacing
%% \documentclass[authoryear,preprint,review,12pt]{elsarticle}

%% Use the options 1p,twocolumn; 3p; 3p,twocolumn; 5p; or 5p,twocolumn
%% for a journal layout:
\documentclass[final,1p,times]{elsarticle}
%% \documentclass[final,1p,times,twocolumn]{elsarticle}
%%\documentclass[final,3p,times]{elsarticle}
%\documentclass[final,3p,times,twocolumn]{elsarticle}
%%\documentclass[final,5p,times]{elsarticle}
%% \documentclass[final,5p,times,twocolumn]{elsarticle}

%%\usepackage{xcolor}

%% For including figures, graphicx.sty has been loaded in
%% elsarticle.cls. If you prefer to use the old commands
%% please give \usepackage{epsfig}

%% The amssymb package provides various useful mathematical symbols
\usepackage{amsmath,amssymb,amsfonts,amsthm,enumerate,multirow,mathtools}
\usepackage{color}
\usepackage{siunitx}
\usepackage[linesnumbered,ruled]{algorithm2e} % for algorithms
\usepackage{diagbox} % for table
\usepackage{cleveref}
\usepackage{placeins} % for floatbarrier
\usepackage[table]{xcolor}% http://ctan.org/pkg/xcolor
\usepackage{graphicx}
\usepackage{tikz}
\usepackage{makecell}

\newtheorem{theorem}{Theorem}

 \newcommand{\mrd}{\mathrm{d}}

\makeatletter
\makeatother

\usepackage{lineno}
%\linenumbers

\journal{International Journal of Solids and Structures}

\bibliographystyle{elsarticle-num}

\begin{document}

\begin{frontmatter}

\title{Well-posedness and trivial solutions to inverse eigenstrain problems}

\author[Newc]{CM Wensrich}
\author[Manc]{S Holman}
\author[Manc]{WBL Lionheart}
\author[ANSTO]{V Luzin}
\author[Newc]{D Cuskelly}
\author[ESS]{O Kirstein}
\author[ANSTO]{F Salvemini}

\address[Newc]{School of Engineering, The University of Newcastle, University Drive, Callaghan, NSW 2308, Australia}
\address[Manc]{Department of Mathematics, Alan Turing Building, University of Manchester, Oxford Rd, Manchester M139PL, UK}
\address[ANSTO]{Australian Centre for Neutron Scattering, Australian Nuclear Science and Technology Organisation (ANSTO), Kirrawee NSW 2232, Australia}
\address[ESS]{European Spallation Source ERIC, P.O. Box 176, SE-221 00 Lund, Sweden}

\begin{abstract}
We examine the well-posedness of inverse eigenstrain problems for residual stress analysis from the perspective of the non-uniqueness of solutions, structure of the corresponding null space and associated orthogonal range-null decompositions.  Through this process we highlight the existence of a trivial solution to all inverse eigenstrain problems, with all other solutions differing from this trivial version by an unobservable null component.  From one perspective, this implies that no new information can be gained though eigenstrain analysis, however we also highlight the utility of the eigenstrain framework for enforcing equilibrium while estimating residual stress from incomplete experimental data.  Two examples based on measured experimental data are given; one axisymmetric system involving ancient Roman medical tools, and one more-general system involving an additively manufactured Inconel sample.  We conclude by drawing a link between eigenstrain and reconstruction formulas related to strain tomography based on the Longitudinal Ray Transform (LRT).  Through this link, we establish a potential means for tomographic reconstruction of residual stress from LRT measurements.
\end{abstract}

\begin{keyword}
Eigenstrain; Inherent strain, Residual stress;
Additive manufacture; Strain tomography
\end{keyword}

\end{frontmatter}

\section{Introduction}

Residual stresses are ubiquitous in both nature and the manufactured world.  In engineering in particular, they play a key role in determining the mechanical performance of machine components. Of particular note, residual stress fields have a dramatic effect on fatigue performance, with compressive stresses known to inhibit the propagation of fatigue cracks \cite{Fatigue:james2007residual,Fatigue:kohler2012residual,Fatigue:webster2001residual}; a fact that is often exploited by various surface treatment processes.

Practically speaking, all manufacturing processes create or alter residual stresses to some degree, and the various mechanisms through which this occurs have been the focus of significant research effort.  This effort is from two perspectives; 
\begin{enumerate}
    \item{Modeling and predicting the residual stress that is created by various manufacturing processes.}
    \item{Measurement and characterisation of residual stress fields within components.}
\end{enumerate}

From both of these perspectives, the \emph{eigenstrain} concept has become a key theoretical framework \cite{Eig:cao2002inverse,Eig:Hill1996Determination,Eig:jun2010eval,Eig:korsunsky2007variational,Eig:korsunsky2008eigen,Eig:korsunsky2017teaching,Eig:mura2013micromechanics,Eig:uzun2023voxel,Eig:cai2022novel}.  Closely related to the \emph{inherent strain} concept for analysing welds and now additive manufacturing (e.g. \cite{Inh:bugatti2018limitations,Inh:setien2019empirical}), the eigenstrain approach views residual stress as being the elastic response to an underlying inelastic strain within the material.  Pioneered by Mura and others (e.g. \cite{EigMura:lin1973elastic,EigMura:mura1976elastic}), the concept was initially focused on understanding and homogenising the effect of inclusions as an extension of Eschelby theory \cite{eshelby1957determination}, however, it has since developed into a common approach for modeling and understanding macroscopic residual stress distributions. Both applications can still be found in current literature (e.g. \cite{richeton2024energy,ma2014principle,yan2022analytical}, verses \cite{Eig:cai2022novel,Eig:jun2010eval,Eig:uzun2023voxel}).

In this paper we provide a brief overview of the eigenstrain framework before examining the mathematical foundations of the approach from the perspective of an inverse problem.  In particular, the ill-posed nature of the problem is highlighted and discussed in terms of the interpretation of solutions.  We conclude by drawing a key link between eigenstrain analysis and an open problem in the area of strain tomography from energy-resolved neutron imaging.

\section{Governing equations and an overview of eigenstrain analysis}

Consider a physical object represented by the finite domain $\Omega \subset \mathbb{R}^3$ with Lipschitz boundary and a unit outward-normal vector $n$ on $\partial \Omega$.  We assume $\Omega$ exists in the absence of body forces (e.g. gravity, magnetic/electric forces) and is static (i.e. all derivatives with respect to time are zero).  We also assume that no external forces are applied to the surface; i.e.
\begin{equation}
    \sigma \cdot n = 0 \text{ on } \partial \Omega,
    \label{NoTraction}
\end{equation}
where $\sigma \in L^2(\mathcal{S}^2,\Omega)$ is the rank-2 stress tensor on $\Omega$\footnote{Throughout the paper we will make reference to the following spaces;
\begin{itemize}
    \item[--]{$H^l(\mathcal{S}^m;\Omega)$: The Sobolev space of symmetric tensor fields of rank-$m$ on $\Omega$ with square-integrable $l^{th}$ derivatives.  For rank-0 scalar fields with square-integrable $l^{th}$ derivatives on $\Omega$ we write $H^l(\Omega)$.}
    \item[--]{$L^2(\mathcal{S}^m;\Omega)$: The Hilbert space of symmetric tensor fields of rank-$m$ on $\Omega$.  $H^0$ is equivalent to $L^2$.  For rank-0 scalar fields on $\Omega$ we write $L^2(\Omega)$.}
\end{itemize}}.  We refer to this condition as the \emph{no traction} constraint or $\sigma$ having \emph{zero boundary flux}.

\begin{figure}[!htb]
    	\centering
        \includegraphics[width=0.3\linewidth]{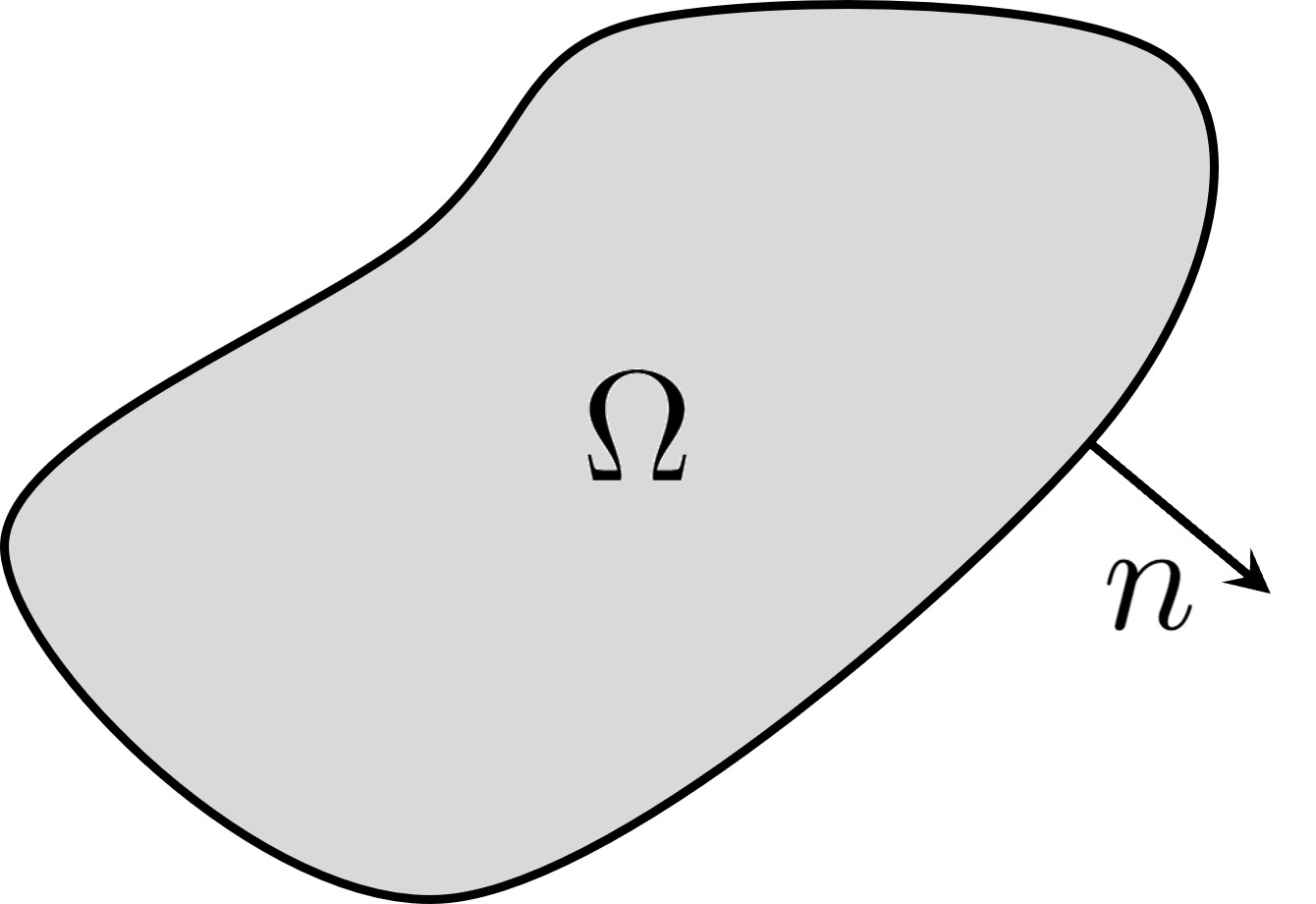}
    	\caption{An object represented by the domain $\Omega$ with surface normal $n$.}
\end{figure}

Total strain at any point within this object originates from a displacement field and can be written
\begin{equation}
    \label{StrainDisp}
    \epsilon^T=\nabla_sU^T
\end{equation}
where $U^T \in H^1(\mathcal{S}^1;\Omega)$, and $\nabla_s$ is the symmetric gradient operator with components 
\[
[\nabla_s U]_{ij}=\frac{1}{2}\Big(\frac{\partial U_i}{\partial x_j}+\frac{\partial U_j}{\partial x_i}\Big).
\]

In general, this total strain can be decomposed as
\begin{equation}
    \label{StrainDecomp}
    \epsilon^T=\epsilon+\epsilon^*
\end{equation}
where $\epsilon$ is the elastic component of strain, and $\epsilon^*$ is an arbitrary eigenstrain representing inelastic strain generated through plasticity, phase change, thermal expansion, etc.  Assuming linear elasticity, $\epsilon$ is related to $\sigma$ though Hooke's law, which for a general anisotropic material takes the form
\[
    \sigma = C:\epsilon,
\]
with $C$ representing the usual rank-4 tensor of elastic constants and $:$ representing contraction over two indices (i.e. $\sigma_{ij}=C_{ijkl}\epsilon_{kl}$).  In this context, we consider the tensor $C$ to represent a bijection from elastic strain to stress with an inverse represented by the usual rank-4 compliance tensor $S$.  In the case of an isotropic material, all components of $C$ can be specified by 2 elastic properties.  For example, in terms of Young's modulus $E$ and Poisson's ratio $\nu$,
\[
    C_{ijkl}= \frac{E}{1+\nu} \Big(\frac{\nu}{1-2\nu}\delta_{ij} \delta_{kl} + \frac{1}{2}\left( \delta_{ik} \delta_{jl} + \delta_{il} \delta_{jk}\right)\Big).
\]

The phrase \emph{eigenstrain analysis} generally refers to one of two problems;
\begin{enumerate}
    \item{\emph{The forward problem:} Given a prescribed eigenstrain, we would like to determine the stress field that it creates.}
    \item{\emph{The inverse problem:} Given an observed residual stress field, determine the eigenstrain field that created it.}
\end{enumerate}

In this endeavour, the relationship between residual stress and eigenstrain can be formed through an assumption of mechanical equilibrium at every point within $\Omega$.  In the absence of body forces (e.g. gravity), this holds that
\begin{equation}
\label{Equilibrium}
    \text{Div }\sigma = 0.
\end{equation}
where $\text{Div}$ is the divergence operator on symmetric rank-2 tensor fields. 
It follows from \eqref{NoTraction} and the Gauss-Ostrogradsky divergence theorem that the average stress within $\Omega$ is zero since
\[
\bar\sigma=\frac{1}{V_\Omega}\int_\Omega \sigma \,\mathrm{d}V =\frac{1}{V_\Omega}\int_{\partial\Omega} (\sigma \cdot n) \otimes x \,  \mathrm{d}A =0,
\]
where $V_\Omega$ is the volume of $\Omega$, $\otimes$ is the dyadic (outer) product of vectors and $\mathrm{d}V$ and $\mathrm{d}A$ are the volume and surface area measures on $\Omega$ and $\partial \Omega$ respectively.  This condition is often referred to as the `stress-balance' when analysing measured residual stress fields.

Combining \eqref{Equilibrium} with \eqref{StrainDisp}, \eqref{StrainDecomp} and Hooke's law, the governing equations for $U^T$ can be written
\begin{equation}
    \label{EquilibriumEigenstrain}
    \text{Div }\big(C:\nabla_s U^T\big) = \text{Div }\big(C:\epsilon^*\big),
\end{equation}
subject to the boundary condition
\begin{equation}
    \label{BoundaryCondition}
    \big(C: \nabla_s U^T\big) \cdot n \Big|_{\partial\Omega}= \big(C:\epsilon^*\big) \cdot n \Big|_{\partial\Omega}.
\end{equation}

A variety of tools exist for solving this forward problem.  In general, this is usually achieved through the finite element method such as that implemented in commercial `structural analysis' packages, or PDE solvers such as the MATLAB PDE Toolbox \cite{MATLAB_PDEtoolbox} and the FEniCS open-source package \cite{alnaes2015fenics,logg2012automated}.  Often the rigidity of commercial packages limits the analysis to isotropic eigenstrain implemented through manipulation of thermal expansion analysis capability, however in the broader context of this approach, the right-hand-sides of \eqref{EquilibriumEigenstrain} and \eqref{BoundaryCondition} can be interpreted as a spatially varying body force and apparent surface traction respectively.

\section{The null space of eigenstrain fields}

We turn our attention to the inverse problem and begin by recognising that a large set of null eigenstrain fields exist.  These are non-zero eigenstrains that result in no stress within the body and represent a significant issue in terms of the uniqueness of possible solutions.  While our language is not entirely precise, we refer to these null fields as being in the null space of the forward eigenstrain problem (i.e. the forward problem considered to be a mapping from eigenstrain to residual stress).

These null fields are characterised by the following theorem;

\begin{theorem}
    \label{NullFields}
    Any eigenstrain of the form $\nabla_s U^*$ for some vector potential $U^*\in H^1(\mathcal{S}^1;\Omega)$ is in the null space of the forward eigenstrain problem, and conversely, any null eigenstrain field can be written in the form $\epsilon^*=\nabla_s U^*$.
\end{theorem}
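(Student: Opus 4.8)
The plan is to prove the two implications separately, with essentially all of the content resting on a preliminary well-posedness statement for the forward problem \eqref{EquilibriumEigenstrain}--\eqref{BoundaryCondition}: for each $\epsilon^* \in L^2(\mathcal{S}^2;\Omega)$ there is a displacement $U^T \in H^1(\mathcal{S}^1;\Omega)$ solving the weak form of \eqref{EquilibriumEigenstrain}--\eqref{BoundaryCondition}, unique up to infinitesimal rigid-body motions, so that the total strain $\epsilon^T = \nabla_s U^T$, the elastic strain $\epsilon = \epsilon^T - \epsilon^*$, and the stress $\sigma = C:\epsilon$ are each uniquely determined by $\epsilon^*$. I would either cite this or establish it as a lemma: it follows from Korn's inequality together with Lax--Milgram applied on the quotient of $H^1(\mathcal{S}^1;\Omega)$ by the finite-dimensional space of rigid motions, using that $C$ is pointwise symmetric positive-definite; note the mixed Neumann-type boundary data is automatically compatible with equilibrium here because both sides of \eqref{EquilibriumEigenstrain} and \eqref{BoundaryCondition} derive from $C:\epsilon^*$.

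For the forward implication, suppose $\epsilon^* = \nabla_s U^*$ with $U^* \in H^1(\mathcal{S}^1;\Omega)$. I would simply substitute $U^T = U^*$ into \eqref{EquilibriumEigenstrain} and \eqref{BoundaryCondition}: both sides become identical, so $U^*$ is a solution of the forward problem for this eigenstrain. The corresponding elastic strain is $\epsilon = \nabla_s U^* - \epsilon^* = 0$, hence $\sigma = C:\epsilon = 0$. By uniqueness of the stress field, the residual stress produced by $\epsilon^*$ vanishes, i.e.\ $\epsilon^*$ lies in the null space of the forward eigenstrain problem.

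For the converse, suppose $\epsilon^*$ is a null field, so that the forward problem produces $\sigma \equiv 0$ on $\Omega$. Since $C$ is a bijection with inverse the compliance tensor $S$, we get $\epsilon = S:\sigma = 0$, and then the decomposition \eqref{StrainDecomp} gives $\epsilon^T = \epsilon + \epsilon^* = \epsilon^*$. But by \eqref{StrainDisp} the total strain is the symmetric gradient of the solving displacement, $\epsilon^T = \nabla_s U^T$ with $U^T \in H^1(\mathcal{S}^1;\Omega)$; taking $U^* = U^T$ yields $\epsilon^* = \nabla_s U^*$, as claimed.

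The only genuine obstacle is the preliminary step: making precise in what sense ``the residual stress produced by $\epsilon^*$'' is defined and single-valued, which requires handling the rigid-motion kernel of $\nabla_s$ and checking the solvability of the elasticity problem with the given traction-type data. Once that is in place, both directions follow immediately from the strain decomposition \eqref{StrainDecomp} and the invertibility of Hooke's law, and no further estimate or construction is needed.
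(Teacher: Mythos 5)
Your proposal is correct and follows essentially the same route as the paper: both directions hinge on uniqueness (up to rigid motions) of the traction-type elasticity problem, giving $\nabla_s(U^T-U^*)=0$ and hence $\sigma=0$ for the forward implication, and on the decomposition \eqref{StrainDecomp} with invertibility of Hooke's law for the converse. The only difference is that you make the well-posedness step (Korn plus Lax--Milgram on the quotient by rigid motions) explicit, whereas the paper simply asserts uniqueness of the elliptic boundary value problem.
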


\begin{proof}[Proof of Theorem \ref{NullFields}]
    
    We assume $\epsilon^*= \nabla_s U^*$ and denote the resulting displacement field in $\Omega$ as $U^T$.  In this case the governing equations \eqref{EquilibriumEigenstrain} become
    \[
        \text{Div }\big(C:\nabla_s (U^T-U^*)\big) =0
    \]
    with the boundary condition \eqref{BoundaryCondition}
    \[
    \big(C:\nabla_s (U^T-U^*)\big)\cdot n \Big|_{\partial \Omega} =0.
    \]
    
    This elliptic boundary value problem has the unique solution $\nabla_s(U^T-U^*)=0$ which implies $U^T$ and $U^*$ are identical up to any infinitesimal rigid body motion in the kernel of $\nabla_s$.
    
    From \eqref{StrainDecomp}, we see that $\epsilon=\nabla_s (U^T-U^*)$, and hence $\epsilon^*=\nabla_s U^*$ implies elastic strain and stress (through Hooke's law) are both zero.  Trivially, we also observe from \eqref{StrainDecomp} that $\sigma=0$ implies that $\epsilon^*=\nabla_s U^T$.
\end{proof}

In the language of applied mechanics, this implies that the \emph{incompatibility} of any given eigenstrain field is responsible for generating residual stress.  Alternatively, we can say that null eigenstrains satisfy the compatibility relation which can be expressed as a vanishing Saint-Venant operator
\[
[W\epsilon^*]_{ijkl}=\frac{\partial^2\epsilon^*_{ij}}{\partial x_k\partial x_l}  + \frac{\partial^2\epsilon^*_{kl}}{\partial x_i\partial x_j}  -\frac{\partial^2\epsilon^*_{il}}{\partial x_j\partial x_k} - \frac{\partial^2\epsilon^*_{jk}}{\partial x_i\partial x_l}=0,
\]
which in $\mathbb{R}^3$ can be simplified to six unique components as specified by the 2-rank symmetric incompatibility tensor
\begin{equation}
    \mathcal{R}\epsilon^*=\nabla \times (\nabla \times \epsilon^*)^T=0.
\end{equation}

Practically this means the inverse problem is inherently ill-posed; any reconstruction of eigenstrain from a known residual stress is not unique.  Without due care, a given numerical solution to the inverse problem  will contain an arbitrary component from the null space chosen on the basis of some regularisation of the problem.  This regularisation may be explicitly stated in terms of a set of assumptions imposed on the system (e.g. a particular form of the stress or strain tensor), or implicitly applied by the numerical optimisation routine (e.g. a solution that minimises the $L^2$-norm via a Moore-Penrose pseudo-inverse).  The former may have some physical basis, the latter is not ideal.  Hill \cite{Eig:Hill1996Determination} addresses this issue by stating ``eigenstrain fields that result in the same residual stress distribution are considered to be equivalent" however the failure to characterise such equivalent fields makes any direct comparison fraught.

Note that the actual physical eigenstrain applied to a sample may contain a component from the null space.  The corollary of Theorem \ref{NullFields} is that this component can never be recovered from measured stress data.

Practically, we want a consistent approach that chooses the same or a predictable member of the null space each time.  Ideally this is achieved by selecting the unique member of the orthogonal complement to the null space that corresponds to the given stress field.  This is the focus of the following section.

\section{Range-null decomposition of eigenstrain fields}
\label{Sec:Range-NullDecomp}

Consider the following operator inner product of rank-2 tensors $a,b\in L^2(\mathcal{S}^2;\Omega)$,
\begin{equation}
\label{InnerProd}
\langle a,b\rangle _C = \int_\Omega (C:a):b \quad \mathrm{d}V.
\end{equation}
Note that since $C$ is positive semi-definite, this meets the requirements of being a valid inner product.  If acting on an elastic strain, the norm associated with this inner product is twice the strain energy within $\Omega$.

We now use this inner product to form an orthogonal decomposition of strain tensors.  We start with the null space of the forward eigenstrain problem;
\[
W=\{\epsilon \in L^2(\mathcal{S}^2;\Omega) : \epsilon = \nabla_s U \text{ for some } U\in H^1(\mathcal{S}^1;\Omega) \}.
\]

In terms of \eqref{InnerProd}, any eigenstrain field orthogonal to this space satisfies
\[
\langle \epsilon^{\perp_C} , \nabla_s U \rangle_C = 0 ,\quad \forall U\in H^1(\mathcal{S}^1;\Omega)
\]
which, through integration by parts, can be written,
\begin{align*}
0&= \int_\Omega (C:\epsilon^{\perp_C}):\nabla_s U \quad \mathrm{d}V \\
&= \int_{\partial\Omega} [(C:\epsilon^{\perp_C})\cdot n] \cdot U \quad \mathrm{d}A - \int_\Omega \mathrm{Div}(C:\epsilon^{\perp_C}) \cdot U \quad \mathrm{d}V.
\end{align*}

Given that this is true for arbitrary $U$, it implies that the orthogonal complement to $W$ is of the form
\[
W^C=\{\epsilon^{\perp_C} \in L^2(\mathcal{S}^2;\Omega):\mathrm{Div}(C:\epsilon^{\perp_C}) =0 \text{ and }
(C:\epsilon^{\perp_C})\cdot n|_{\partial\Omega} =0\}.
\]
leading to the orthogonal decomposition
\[
L^2(\mathcal{S}^2;\Omega)=W \oplus W^C,
\]
whereby any eigenstrain field can be decomposed into a null component and its complement of the form
\begin{equation}
\label{Decomp}
\epsilon^*=\nabla_s U^*+\epsilon^{*\perp_C}.
\end{equation}

We now seek a solution to the inverse eigenstrain problem associated with a given stress field $\sigma$ in terms of this decomposition.  From this perspective, \eqref{EquilibriumEigenstrain} and \eqref{BoundaryCondition} become
\begin{align*}
\mathrm{Div}(C:\nabla_sU^T)&=\mathrm{Div}(C:\nabla_s U^*) \text{, and }\\
(C:\nabla_sU^T)\cdot n \Big|_{\partial\Omega}&=(C:\nabla_s U^*)\cdot n \Big|_{\partial\Omega},
\end{align*}
which has a family of solutions of the form $dU^T=dU^*$.  Hence from \eqref{StrainDecomp} and \eqref{Decomp} we see that $\epsilon^{*\perp_C}=-\epsilon$ is a solution to the original eigenstrain problem.

This is quite a trivial solution, and perhaps somewhat unsatisfying.  It implies any given inverse eigenstrain problem can be solved trivially through Hooke's law by an eigenstrain of the form
\[
\epsilon^{*\perp_C}=-S:\sigma,
\]
where $S$ is the usual rank-4 tensor of elastic properties (i.e. the inverse of $C$).  Any other solution differs from this trivial one by some unobservable potential of the form $\nabla_sU^*$.

None-the-less, there are still grounds to continue as a typical eigenstrain problem involves limited knowledge of the stress field; usually at a finite set of points and often incomplete in terms of known components.  In this context, the eigenstrain framework provides one possible mechanism through which stress fields satisfying \eqref{NoTraction} and \eqref{Equilibrium} can be generated and fitted to data.  In this endeavor, orthogonal decompositions such as that above can be used to dramatically reduce the search space and bring consistency (i.e. uniqueness) to the final result.  We now demonstrate this process - beginning with the special case of axisymmetric systems.

\section{Special case: Axisymmetric systems}

\subsection{Governing equations}

Consider a long cylindrical object of radius $R$ with no load applied to the outer surface. The radial coordinate $r$ measures out from the centreline and the $z$-coordinate is aligned with the axis.  The object is assumed to be subject to axisymmetric eigenstrains in the three coordinate directions
\[
\epsilon_{rr}^*(r), \epsilon_{\theta\theta}^*(r), \epsilon_{zz}^*(r) \quad \text{for }r \in [0,R],
\]
with all other components assumed to be zero.

In response the material deforms according to a radial $U_r(r)$ and axial displacement $U_z(r)$  from which we define a total strain of
\begin{align}
    \label{StrainDisp1}
    \epsilon_{rr}^T &=\epsilon_{rr}+\epsilon_{rr}^*=\frac{\mrd U_r}{\mrd r}\\
    \label{StrainDisp2}
    \epsilon_{\theta\theta}^T &=\epsilon_{\theta\theta}+\epsilon_{\theta\theta}^*=\frac{U_r}{r}\\
    \label{StrainDisp3}
    \epsilon_{zz}^T &=\epsilon_{zz}+\epsilon_{zz}^*=\frac{\mrd U_z}{\mrd z},
\end{align}
and $\epsilon^T_{r\theta}=\epsilon^T_{rz}=\epsilon^T_{\theta z}=0$.

In the infinite case $U_z=0$ and the total strain in the $z$-direction vanishes.  For a `sufficiently long' cylinder, we can assume the total strain in the axial direction is constant and equal to $\epsilon_{zz}^T=\bar\epsilon_{zz}$.

Far from the end, the axisymmetric and `long' assumptions force all shear stress components to zero along with all stress gradients in the $z$-direction.  From this, the governing equations in terms of stress \eqref{Equilibrium} become
\begin{equation}
    \label{Equ}
    \frac{\mrd \sigma_{rr}}{\mrd r}+\frac{\sigma_{rr}-\sigma_{\theta\theta}}{r}=0.
\end{equation}
Note that $\sigma_{zz}$ has no constraint other than a stress balance of the form
\begin{equation}
    \label{NetZeroForce}
    \int_0^Rr\sigma_{zz}\mathrm{d}r=0.
\end{equation}

From Hooke's law we can write
\begin{align}
    \label{HookesLaw1}
    \sigma_{rr}&=\frac{E}{(1+\nu)(1-2\nu)}\big[ (1-\nu)\epsilon_{rr}+\nu\epsilon_{\theta\theta}+\nu\epsilon_{zz}\big]\\
    \label{HookesLaw2}
    \sigma_{\theta\theta}&=\frac{E}{(1+\nu)(1-2\nu)}\big[ \nu\epsilon_{rr}+(1-\nu)\epsilon_{\theta\theta}+\nu\epsilon_{zz}\big]\\
    \label{HookesLaw3}
    \sigma_{zz}&=\frac{E}{(1+\nu)(1-2\nu)}\big[ \nu\epsilon_{rr}+\nu\epsilon_{\theta\theta}+(1-\nu)\epsilon_{zz}\big],
\end{align}
and combine this with the strain-displacement relationship and \eqref{Equ} to write the governing equation for $U_r$ as
\begin{equation}
    \label{GovEq}
    \frac{\mrd^2U_r}{\mrd r^2}+\frac{1}{r}\frac{\mrd U_r}{\mrd r}-\frac{U_r}{r^2}=b,
\end{equation}
where
\[
    b=\frac{\mrd \epsilon_{rr}^*}{\mrd r}+
    \frac{\nu}{1-\nu}\Bigg(\frac{\mrd\epsilon_{\theta\theta}^*}{\mrd r}+\frac{\mrd \epsilon_{zz}^*}{\mrd r}\Bigg)+
    \frac{1-2\nu}{1-\nu}\frac{\epsilon_{rr}^*-\epsilon_{\theta\theta}^*}{r}.
\]

Boundary conditions for $U_r$ stem from continuity at the centre and the absence of radial stress on the surface (i.e. $\sigma_{rr}(R)=0$) and can be written in the form
\begin{align}
    \label{Continuity}
    U_r(0)&=0, \text{ and}\\
    \label{ZeroTraction}
    \Big[\frac{\mrd U_r}{\mrd r}+\frac{U_r}{r}\Big]_{r=R}&=\Big[(1-\nu)\epsilon_{rr}^*+\nu\epsilon_{\theta\theta}^*+\nu(\epsilon_{zz}^*-\bar{\epsilon}_{zz})\Big]_{r=R}
\end{align}
These relations form the governing equations and boundary conditions for $U$ (and hence $\sigma$) corresponding to a given eigenstrain. 

\subsection{Null axisymmetric fields}

As before, we can characterise the null space of this system as the gradients of potential fields.  In this axisymmetric system, these take the form
\begin{align*}
    \epsilon_{rr}^*&=\frac{\mrd U}{\mrd r}\\
    \epsilon_{\theta\theta}^*&=\frac{U}{r}\\
    \epsilon_{zz}^*&=\bar\epsilon_{zz}
\end{align*}
for any $U(r)$ such that $U(0)=0$ and constant $\bar\epsilon_{zz}$.  Alternatively, this can be expressed as the condition
\begin{align}
    \epsilon_{rr}^*&=\frac{\mrd }{\mrd r} \big(r\epsilon_{\theta\theta}^*\big)\\
    \epsilon_{zz}^*&=\bar\epsilon_{zz}
\end{align}
for any constant axial strain $\bar\epsilon_{zz}$.

Using a similar process to before, we can construct the orthogonal complement to this null space with respect to any given inner product.  In this case we use the usual inner product for $L^2(\mathcal{S}^2;\mathbb{R}^3)$ which can written for our axisymmetric system as
\[
\langle a,b \rangle = 2\pi \int_0^R r (a:b) \mathrm{d}r
\]

In terms of this inner product, any eigenstrain field orthogonal to the null space is such that
\[
\int_0^R r \Big( \epsilon^{*\perp}_{rr}\frac{\mrd U}{\mrd r}+\epsilon^{*\perp}_{\theta\theta}\frac{U}{r}+\epsilon^{*\perp}_{zz}\bar\epsilon_{zz}\Big) \mathrm{d}r = 0,
\]
$\forall U\in H^1([0,R])$ and $\bar\epsilon_{zz} \in \mathbb{R}$.  Given the arbitrary nature of $U$ and $\bar\epsilon_{zz}$, this implies
\[
\int_0^R r\epsilon^{*\perp}_{zz} \mathrm{d}r = 0 \quad \text{and} \quad \int_0^R r \Big( \epsilon^{*\perp}_{rr}\frac{\mrd U}{\mrd r}+\epsilon^{*\perp}_{\theta\theta}\frac{U}{r} \Big) \mathrm{d}r =0.
\]
Integrating by parts, the second expression becomes
\[
r\epsilon^{*\perp}_{rr}U\Big|_{r=R} + \int_0^R \Big(\epsilon^{*\perp}_{\theta\theta}-\frac{\mrd}{\mrd r}(r\epsilon^{*\perp}_{rr})\Big)U \mathrm{d}r =0.
\]
and once again, the arbitrary nature of $U$ implies
\[
\epsilon^{*\perp}_{rr}(R)=0 \quad \text{and} \quad
\epsilon^{*\perp}_{\theta\theta}=\frac{\mrd}{\mrd r}(r\epsilon^{*\perp}_{rr}).
\]

So in terms of the inverse eigenstrain problem, we can restrict our search to eigenstrain fields that satisfy these conditions without any loss of generality.

\subsection{Polynomial representation}

We now consider the case where the axisymmetric eigenstrain components are expressed as polynomials of order $l-1$ of the form 
\begin{equation}
    \label{PolyEigenstrain}
    \epsilon_{rr}^*=\sum_{i=1}^l{f_ir^{l-i}}, \quad \epsilon_{\theta\theta}^*=\sum_{i=1}^l{g_ir^{l-i}} \quad \text{and} \quad\epsilon_{zz}^*=\sum_{i=1}^l{h_ir^{l-i}}
\end{equation}
From this it follows that the right hand side of \eqref{GovEq} is an order $l-2$ polynomial of the form;
\[
    b=\sum_{i=1}^{l-1}{b_ir^{l-1-i}},
\]
with the coefficients $\{b_i\}$ constructed from $\{f_i\}$, $\{g_i\}$ and $\{h_i\}$.

The forward problem consists of finding the corresponding solution to \eqref{GovEq} with respect to this driving term.  In this regard, the homogeneous part can be written
\[
U_0=\alpha r+\frac{\beta}{r},
\]
and from \eqref{Continuity} we see that $\beta=0$.  Solving for a particular solution, we construct the general solution as a polynomial of order $l$ of the form
\begin{equation}
    U_r=\alpha r + U_p,
\end{equation}
where
\[
U_p=\sum_{i=1}^{l+1}{\alpha_ir^{l+1-i}}
\]
and
\[
\alpha_i=\begin{cases}
            0 & \text{ for } i=l \text{ or } l+1,\\
            \frac{b_i}{(l-i)^2-1} & \text{ otherwise.}
        \end{cases}
\]

The zero-traction boundary condition \eqref{ZeroTraction} then provides
\[
    \alpha+\nu\bar{\epsilon}_{zz}=\Big[
    (1-\nu)\big(\epsilon_{rr}^*-\frac{\mrd U_p}{\mrd r}\big)
    +\nu\big(\epsilon_{\theta\theta}^*-\frac{U_p}{r}\big)
    +\nu\epsilon_{zz}^*
    \Big]_{r=R},
\]
and similarly, net-zero axial force \eqref{NetZeroForce} holds that
\[
    \label{Eq2}
    2\alpha+\frac{1-\nu}{\nu}\bar{\epsilon}_{zz}=\frac{2}{R^2}\int_0^R{
    r\Big[
    \big(\epsilon_{rr}^*-\frac{\mrd U_p}{\mrd r}\big)
    +\big(\epsilon_{\theta\theta}^*-\frac{U_p}{r}\big)
    +\frac{1-\nu}{\nu}\epsilon_{zz}^*
    \Big]
    \mathrm{d}r}.
\]
Together these can be solved for the unknown constants $\alpha$ and $\bar{\epsilon}_{zz}$.

From this solution for $U_r$, we can compute elastic strain from the earlier strain-displacement relationships \eqref{StrainDisp1}-\eqref{StrainDisp3}, and hence stress through Hooke's law \eqref{HookesLaw1}-\eqref{HookesLaw3}.  The inverse problem is then the task of determining the coefficients $\{f_i\}$, $\{g_i\}$ and $\{h_i\}$ for $i=1\ldots l$ that correspond to a given stress distribution.  

In this context, we can state the condition for a null eigenstrain field as
\[
    f_i=\Big(l-i+2-\frac{1}{l-i}\Big)g_i \quad \text{and} \quad h_i=0 \text{ for } i \ne l
\]
Similarly, for an eigenstrain in the orthogonal compliment 
\begin{equation}
    \label{PolyConstraints}
    g_i=(l-i+1)f_i, \quad f_l=-\sum_{i=1}^{l-1} f_i R^{l-i} \quad \text{and} \quad h_l=-2 \sum_{i=1}^{l-1} \frac{h_i}{l-i+2}R^{l-i}
\end{equation}

From the perspective of the inverse problem as a least-squares minimisation problem, we can use the latter to dramatically reduce the search space.  We now demonstrate this process in the context of measured experimental data from a neutron scattering experiment.

\section{Example 1: Residual stress in ancient Roman artifacts}

\subsection{Context, samples and experimental measurements}

Figure \ref{romantools} shows four ancient Roman medical tools attributed to the 1$^{st}$ and 2$^{nd}$ century CE from the collection of the RD Milns Antiquities Museum at the University of Queensland\footnote{Acquisition numbers 15.001 d (i) to (iv)}.  These four tools are examples of spathomele or spatula probes found within a larger set of instruments including scalpels, hooks, forceps and a bone-lever, along with a medical box and instrument case.  The four probes each consist of a long shaft with an olivary point at one end and a spatula at the other. They are considered to be pharmaceutical tools with the olivary end used for stirring medicaments and the spatula for spreading them on the affected body part.  The collection was also found with some surviving medicine or ointment within a separate case.  

They were all purchased by the Museum in 2015 with funds from Dr Glenda Powell AM in Memory of Dr Owen Powell OAM.  Prior to this, they were in a French private collection which in turn purchased them from another private collection in Japan.

As a part of an experiment aimed at establishing the original manufacturing techniques used in their production, residual stress within the long cylindrical section of each probe was measured using the KOWARI engineering diffractometer within the Australian Centre for Neutron
Scattering (ACNS) at the Australian Nuclear Science and Technology Organisation (ANSTO)
\cite{kirstein2009strain}.  It was hypothesised that the eigenstrain distribution and resulting residual stress field embedded during manufacture may help to rule-out or support potential production techniques.

Within the long cylindrical sections, the prior axisymmetric assumptions should hold to at least some degree, and the measurements were made on this basis at the midpoint of each shaft.  At the locations of interest, the four samples had diameters as given in Table \ref{RomanToolsDiameter}.

\begin{figure}[!hbt]
    \centering
    \includegraphics[width=0.75\linewidth]{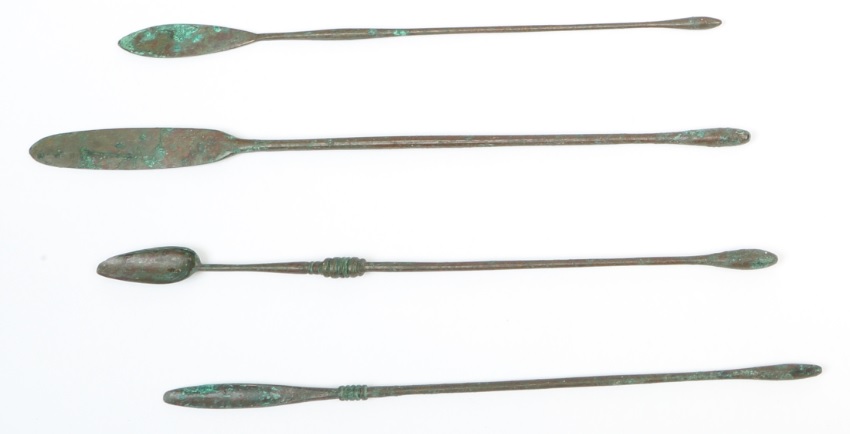}\\
    \caption{Roman bronze medical tools dating to 1-200CE Italy (from \cite{RomanToolsURL})}
    \label{romantools}
\end{figure}

\begin{table}
    \centering
    \begin{tabular}{cc}
        Sample & Diameter [mm]\\
        \hline\hline
        1 & 3.0\\
        2 & 2.2\\
        3 & 2.2\\
        4 & 1.6
    \end{tabular}
    \caption{Diameters of the cylindrical sections of the four Roman medical tools}
    \label{RomanToolsDiameter}
\end{table}

\begin{figure}[!hbt]
    \centering
    \includegraphics[width=0.5\linewidth]{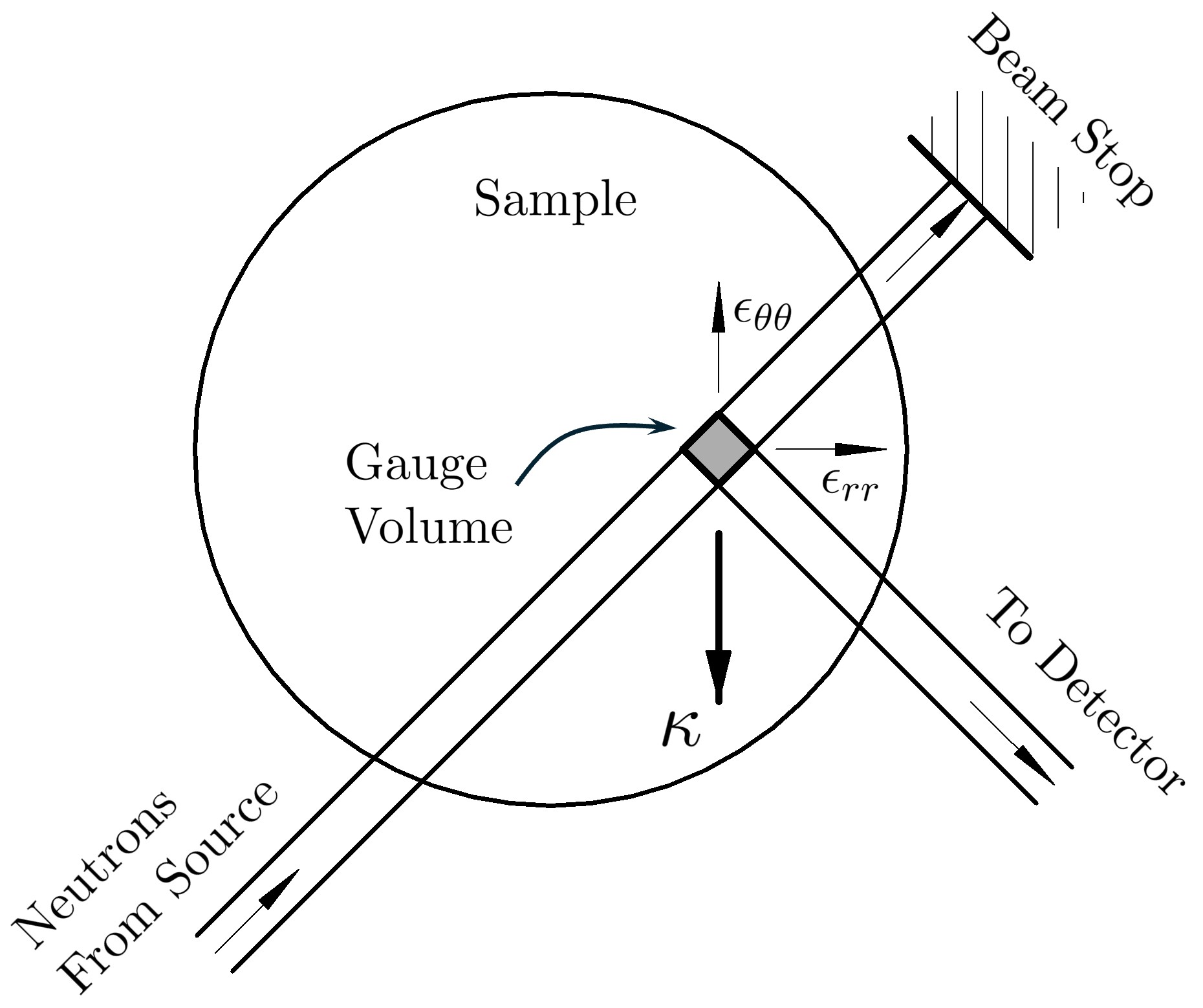}\\
    \caption{Geometry of strain measurements on the KOWARI instrument at ANSTO.  Through precise measurement of the diffraction angle to the detector, normal strain in the direction of $\kappa$ can be measured within the gauge volume.}
    \label{KOWARI}
\end{figure}

Details of the neutron-based strain measurement technique can be found elsewhere (e.g. \cite{hutchings2012measurement,withers2007mapping,hendriks2019robust}).  Briefly, the process relies upon measuring elastic strain as indicated by relative change in the lattice spacing of the crystal structure within the sample.  This involves accurately measuring certain angles of diffraction for neutrons of a given wavelength which are then related to lattice spacing through Bragg's Law.  With reference to Figure \ref{KOWARI}, each individual strain measurements is of the form
\[
\langle \epsilon_{ij}\kappa_i\kappa_j \rangle =\frac{d-d_0}{d_0}
\]
where $\langle \cdot \rangle $ denotes an average within a small, well-defined, gauge volume, $\kappa$ is a measurement direction relative to the sample, $d$ is a measured lattice spacing, and $d_0$ is a reference lattice spacing corresponding to unstressed material.  From a set of measurements over a sufficient number of directions, the full strain tensor can be determined at a series of points within the sample.  It is important to note that these strain measurements refer only to the elastic portion of strain and hence relate directly to stress through Hooke's law.

The strain measurements from the experiment consisted of radial, hoop and axial components at a series of points across each diameter.  These measurements were based on the relative shift of the (311) diffraction peak in copper for neutrons of wavelength 1.56\AA. Gauge volumes were $0.3 \times 0.3 \times 20$ mm, for radial and hoop components, and $0.3 \times 0.3 \times 0.5$ mm for axial.  Corresponding components of the stress tensor at each point were calculated using Hooke's law with elastic constants for tin-bronze estimated to be $E=130$ GPa and $\nu=0.34$.  

The resulting stress distributions are shown as the data points in Figure \ref{AxisymSoln} and \ref{Fit_d0}.  Broadly speaking, the observed distributions are consistent with either cold forging (hammering), or casting, as opposed to drawing.  As an exercise, we now analyse these data sets from the perspective of eigenstrain theory.

\begin{figure}[!htb]
    \centering
    \includegraphics[width=0.4\linewidth]{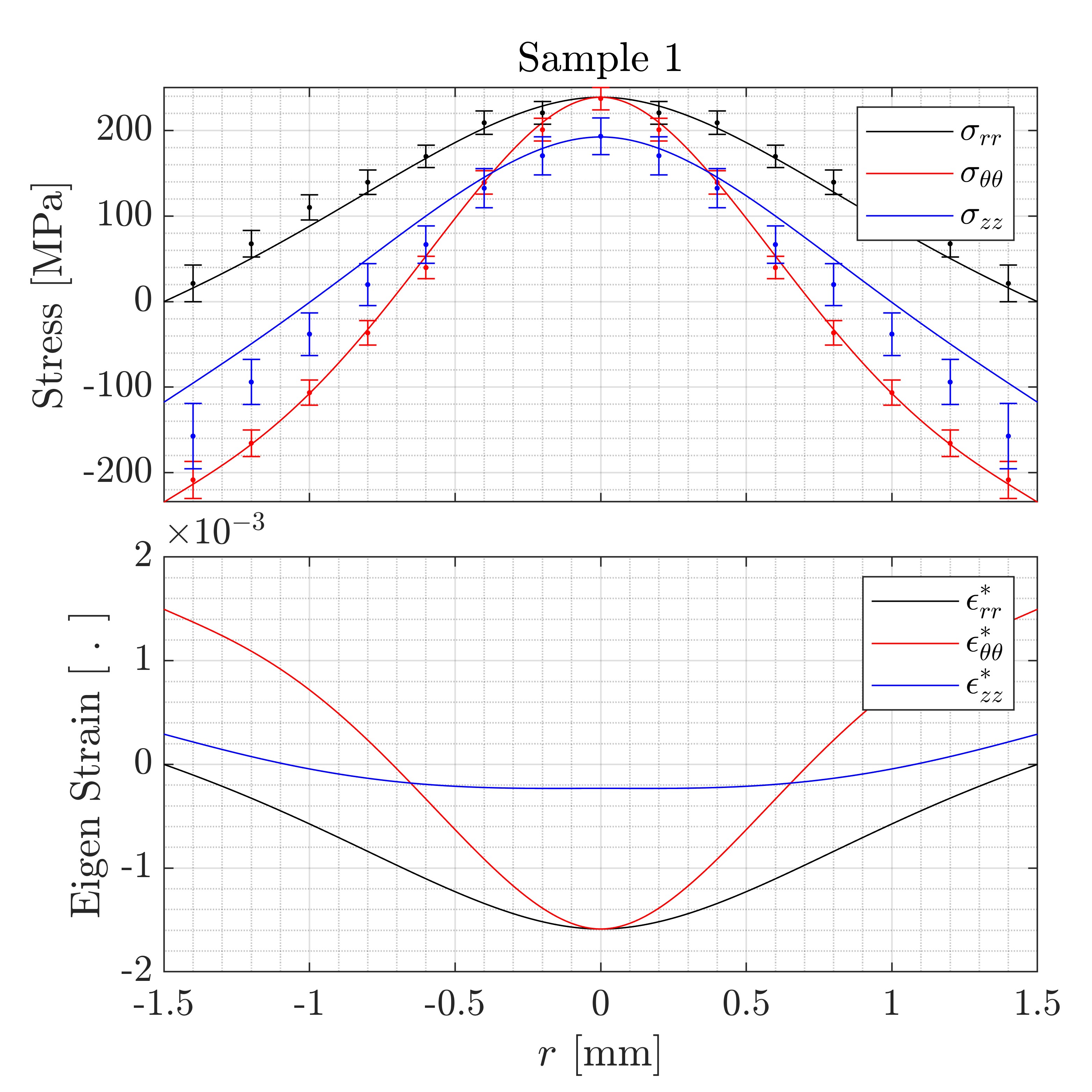}
    \includegraphics[width=0.4\linewidth]{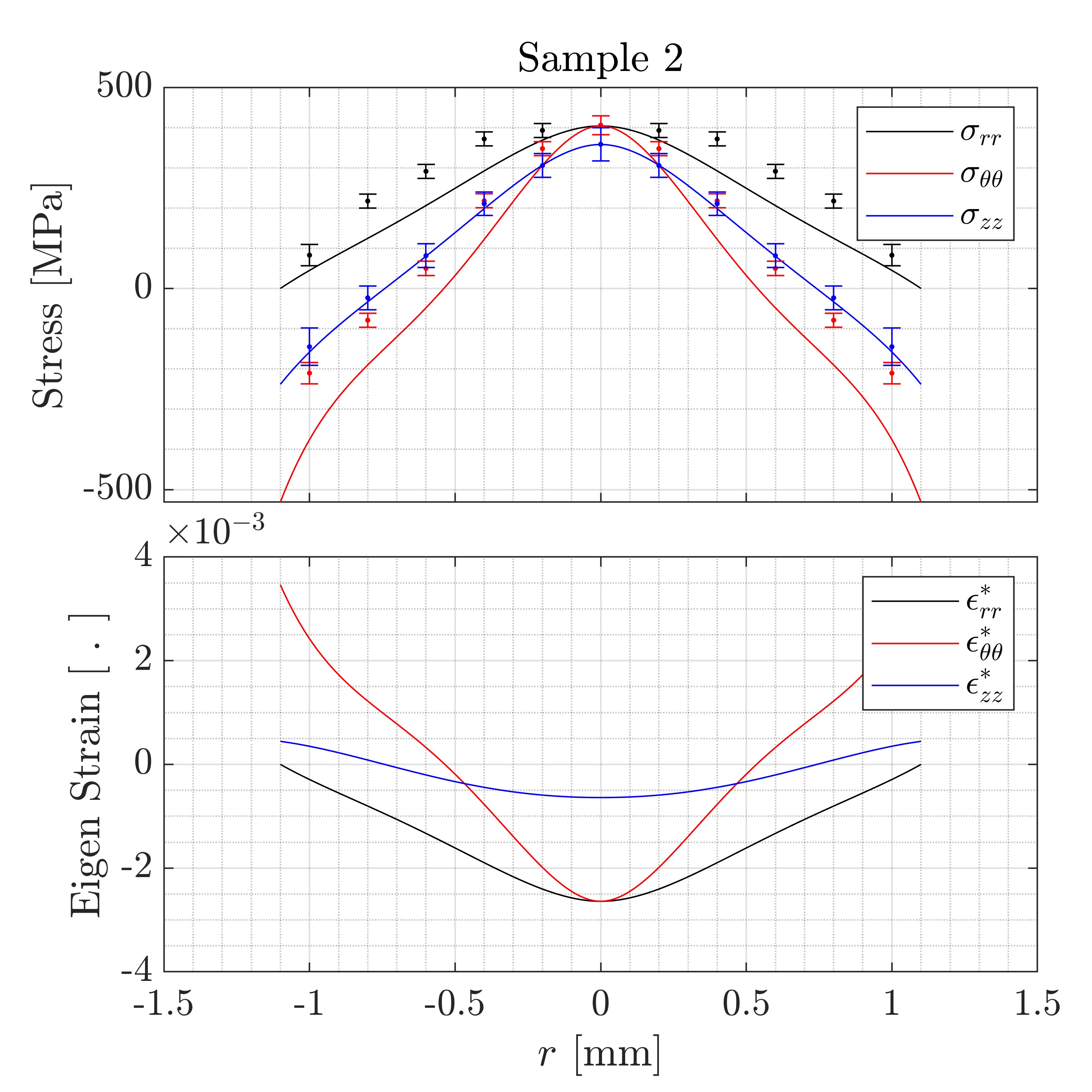}
    \includegraphics[width=0.4\linewidth]{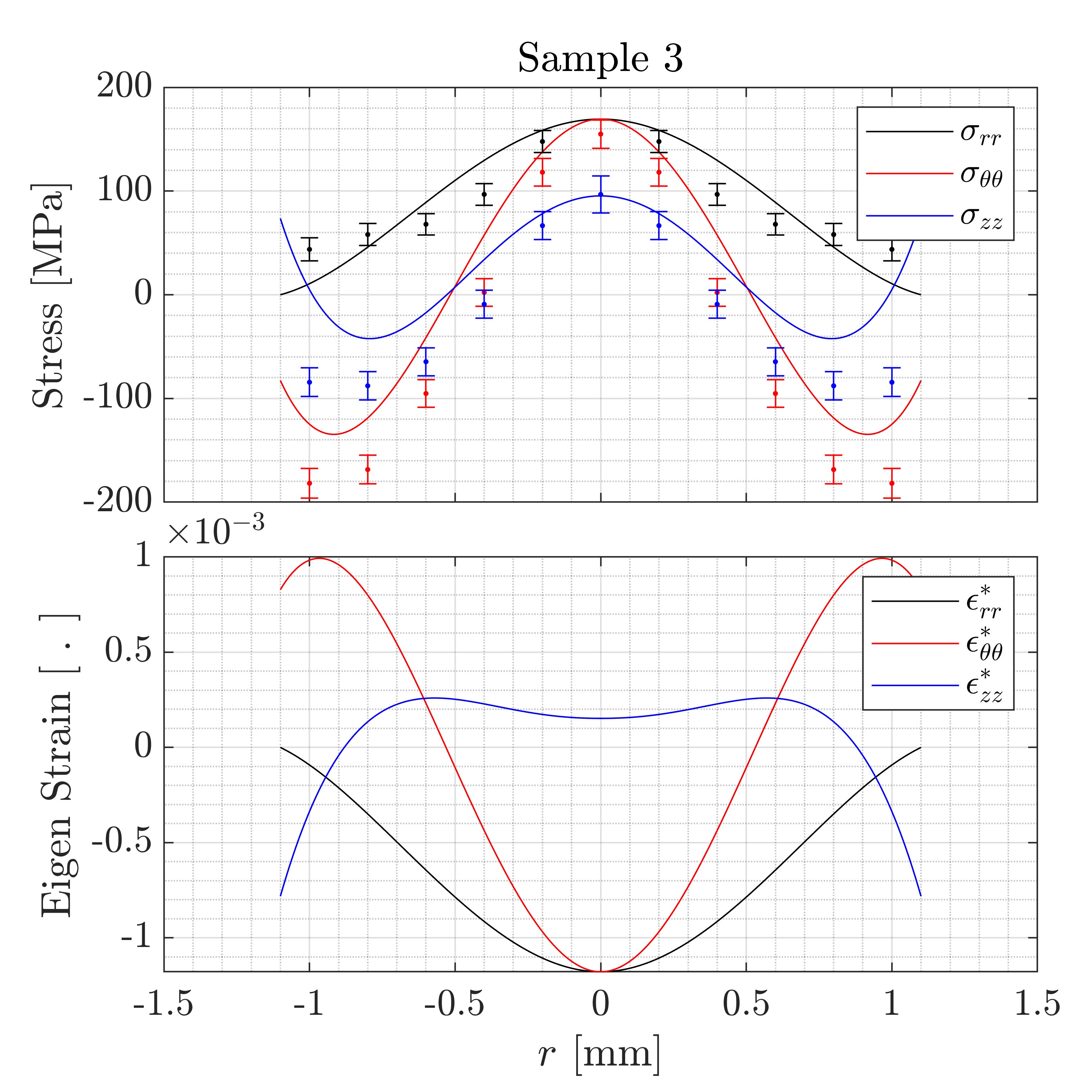}
    \includegraphics[width=0.4\linewidth]{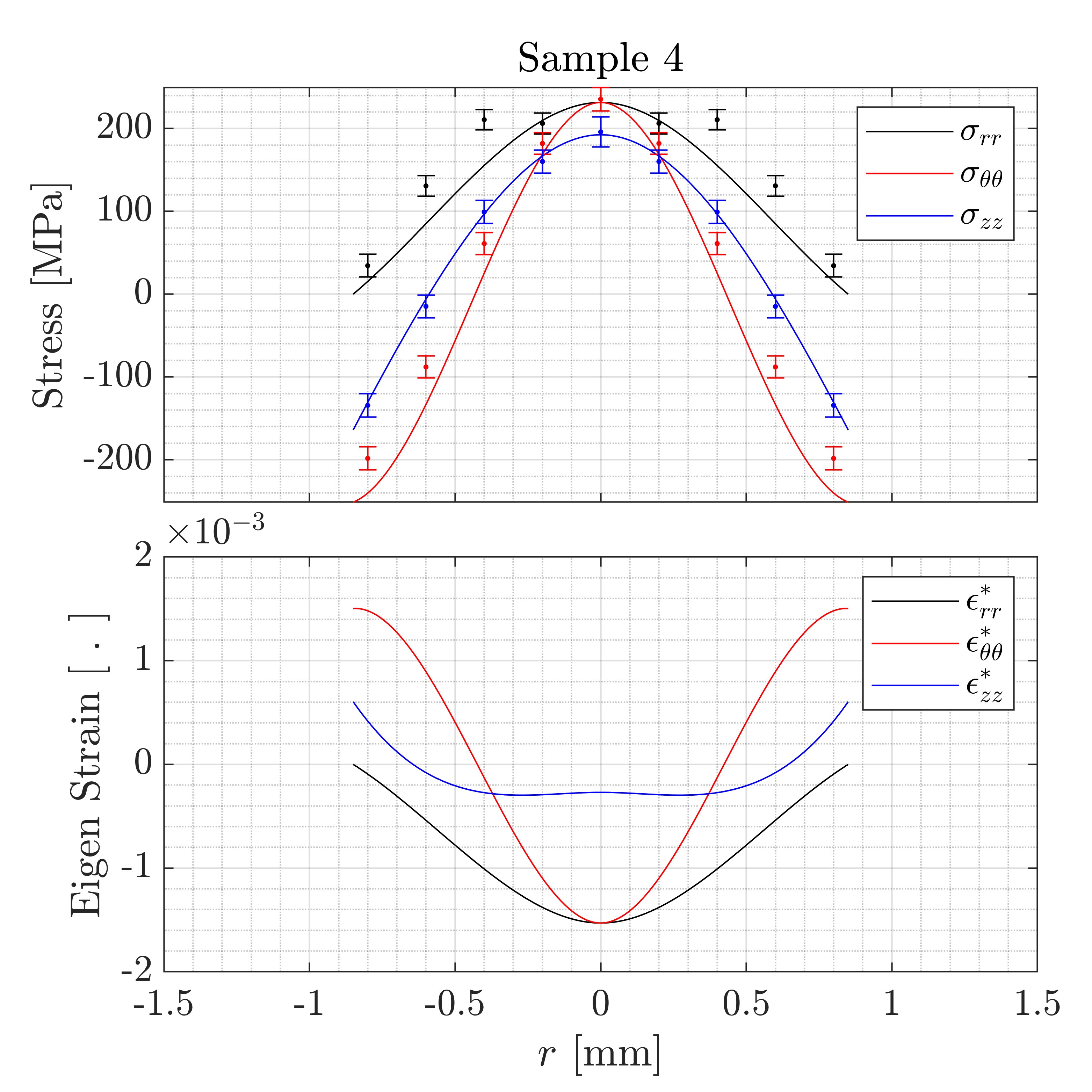}
    \caption{Eigenstrain solutions for the residual stress fields in the four Roman utensils compared against the measurements from KOWARI.}
    \label{AxisymSoln}
\end{figure}

\subsection{Polynomial eigenstrain solutions}

4$^{th}$ order polynomial coefficients for $\epsilon_{rr}^*$, $\epsilon_{\theta\theta}^*$, and $\epsilon_{zz}^*$ were determined through least squares minimisation of the error between the measured residual stress profiles and predictions from the axisymmetric model detailed above.  This optimisation was carried out under the following conditions;

\begin{enumerate}
    \item{Coefficients of linear terms were set to zero on the basis that gradients should vanish at the origin of the axisymmetric system.}
    \item{Null components were excluded by applying the constraints outlined in \eqref{PolyConstraints} to the coefficients.}
\end{enumerate}

\begin{figure}[!htb]
    	\centering
        \includegraphics[width=0.4\linewidth]{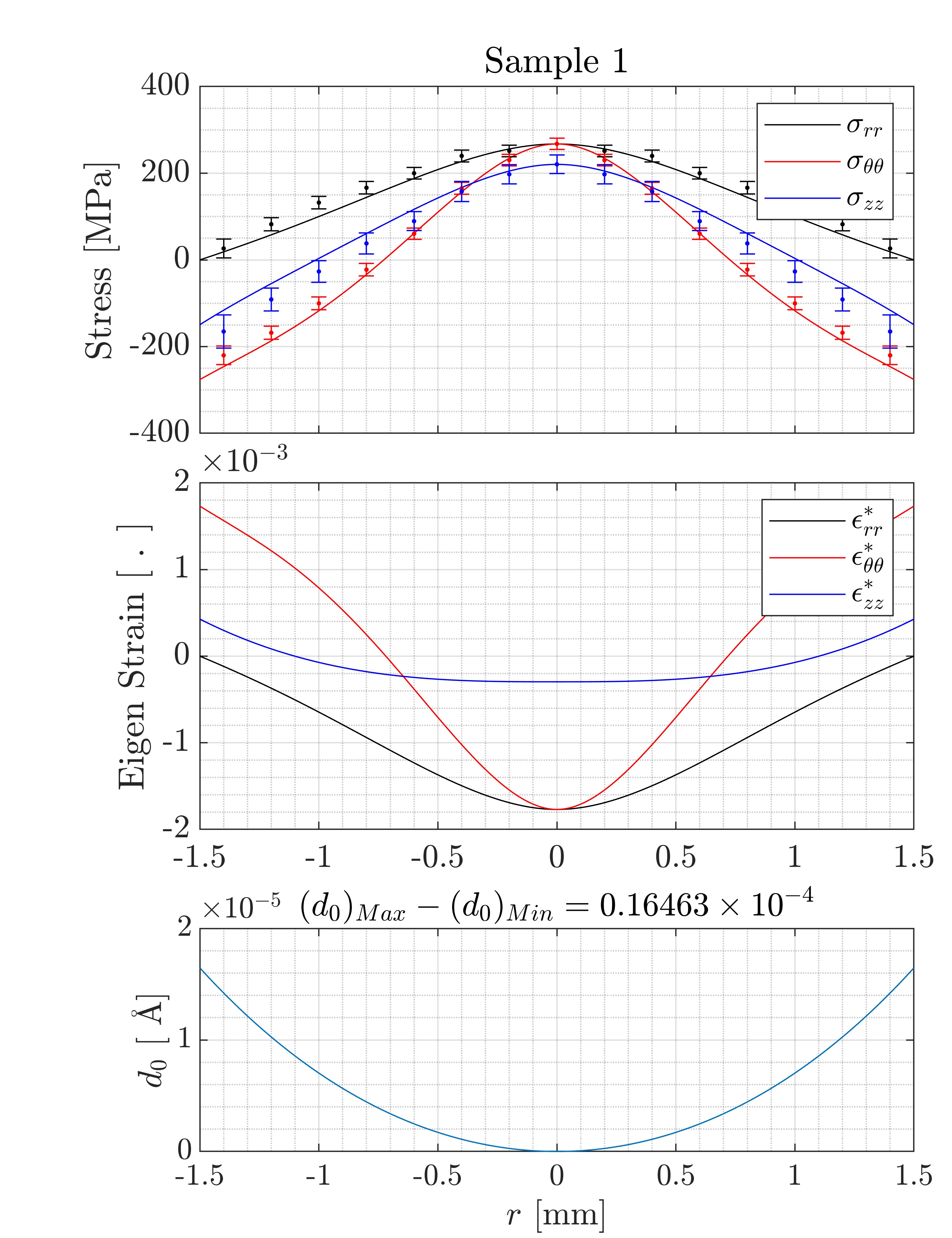}
        \includegraphics[width=0.4\linewidth]{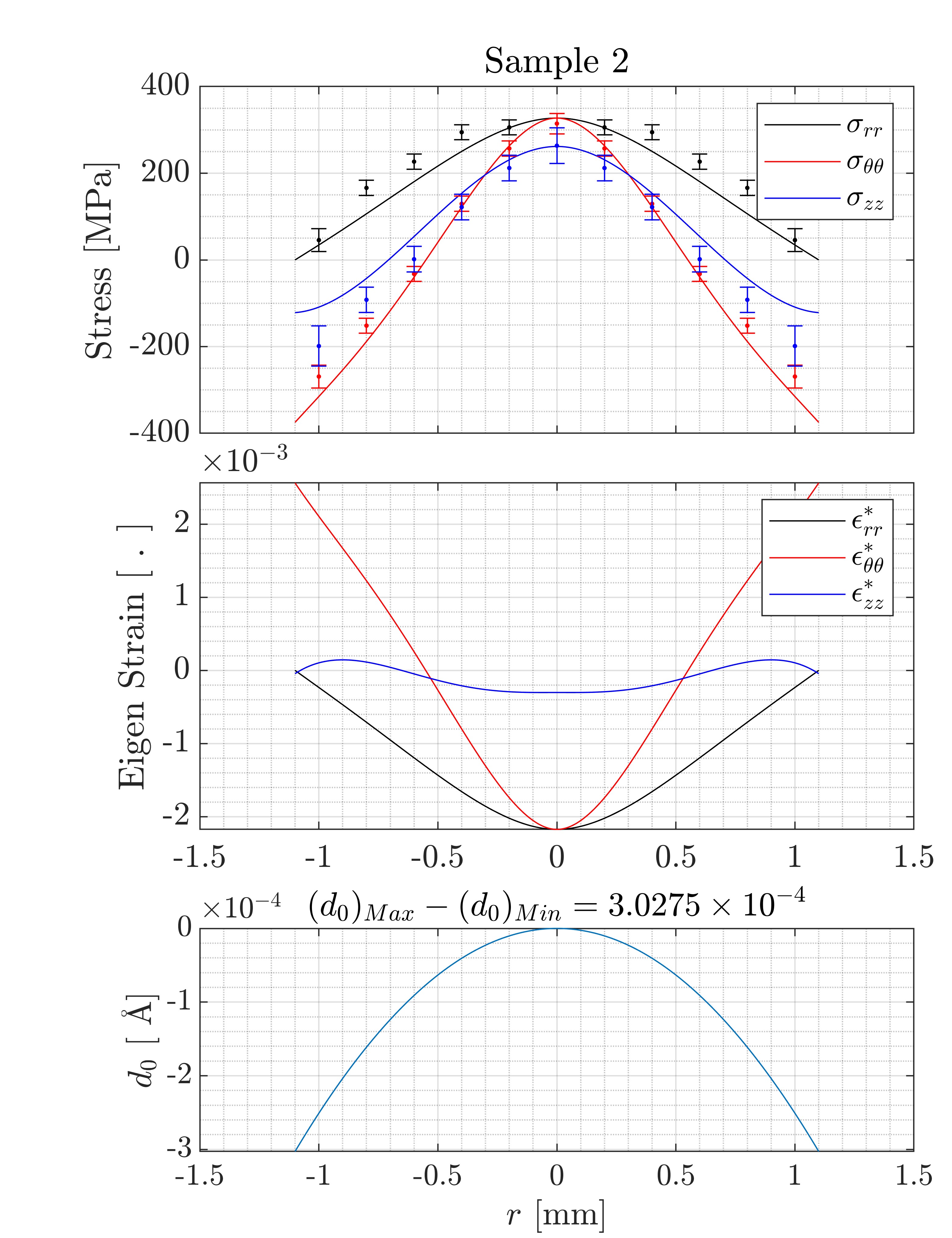}
        \includegraphics[width=0.4\linewidth]{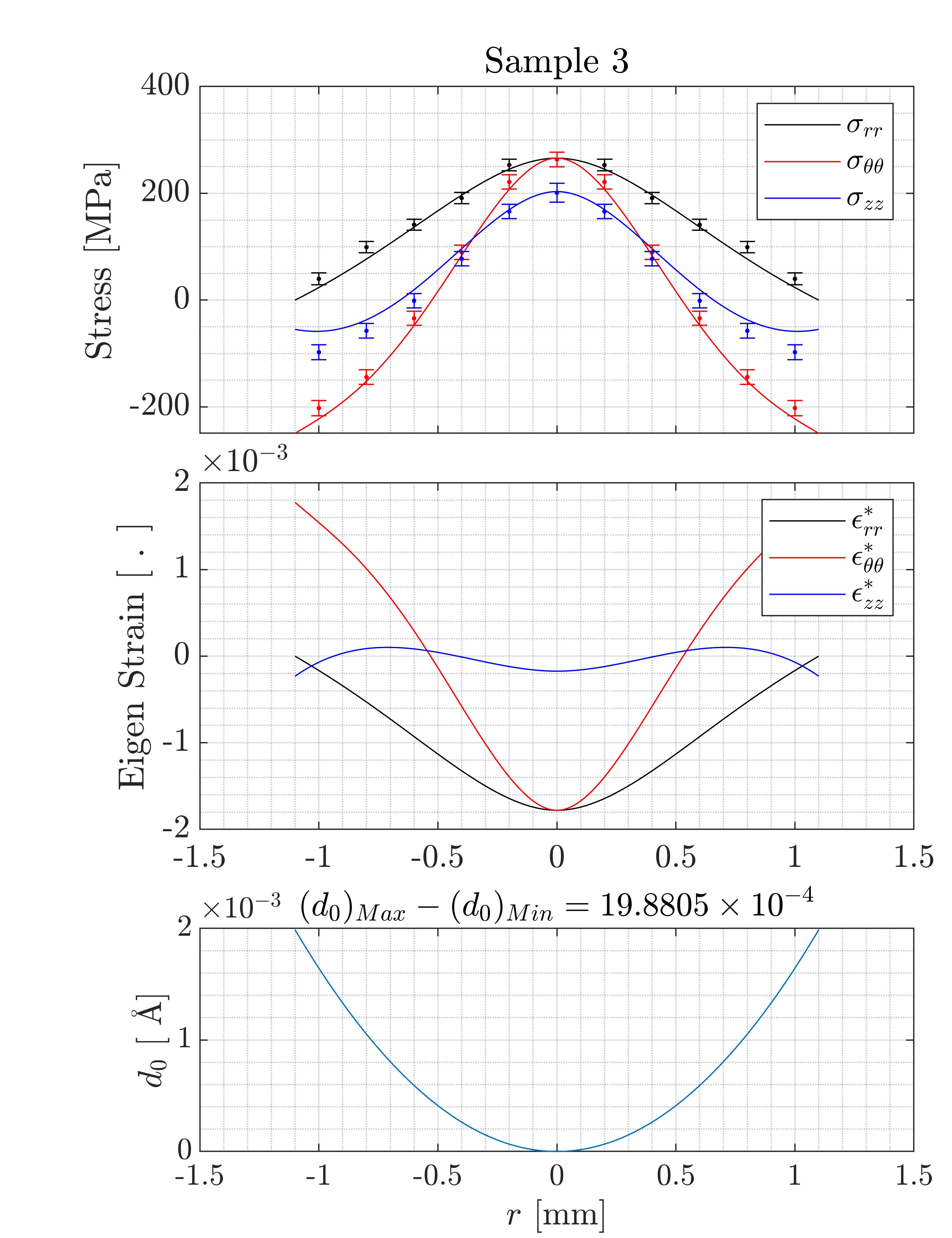}
        \includegraphics[width=0.4\linewidth]{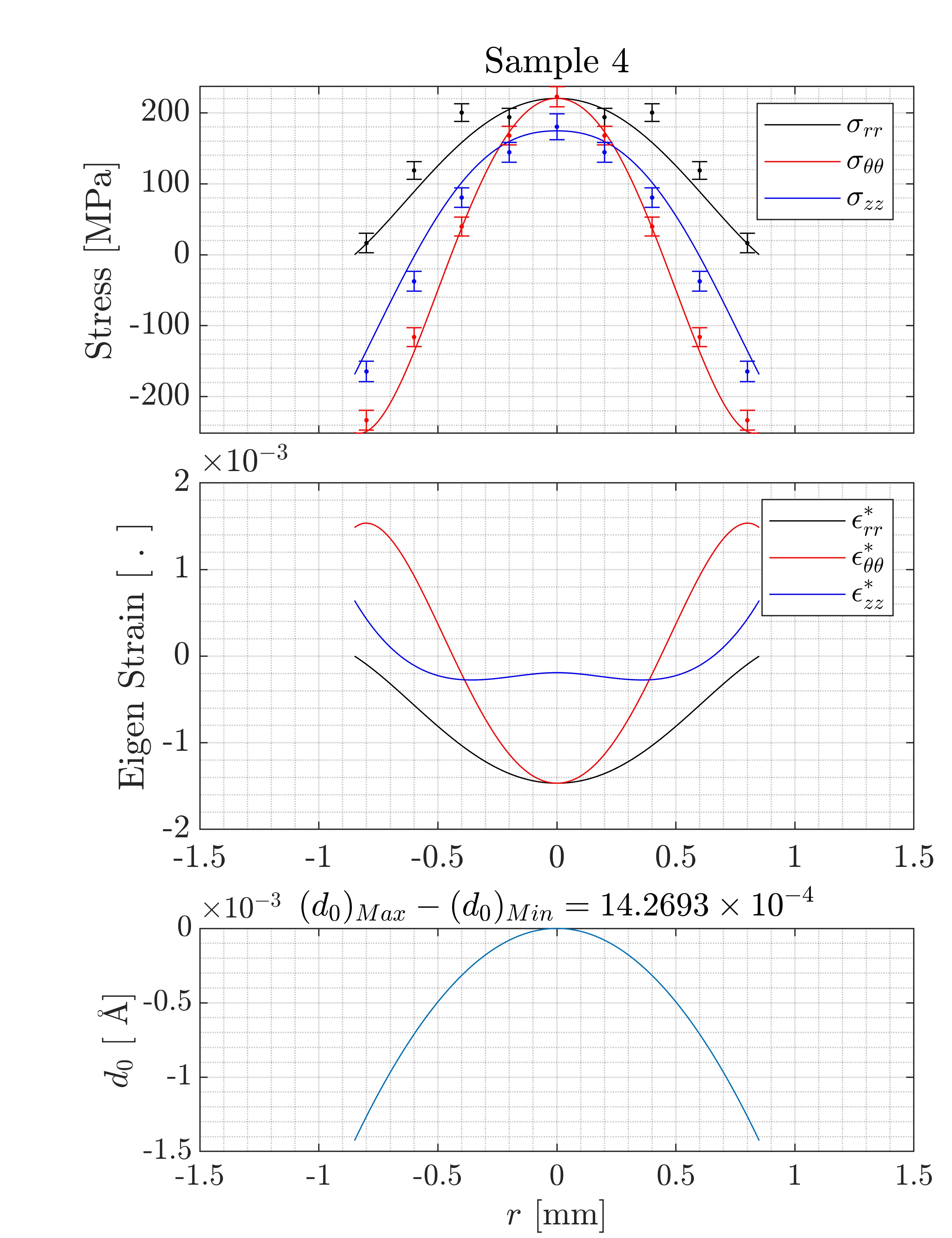}
    	\caption{Eigenstrain solutions for the Roman utensils computed alongside a variable isotropic $d_0$ that was assumed to be a function of radius.}
        \label{Fit_d0}
\end{figure}

The optimisation was implemented using the Levenberg-Marquardt algorithm within the `\texttt{lsqnonlin}' function in the MATLAB proprietary Optimisation Toolbox.  Figure \ref{AxisymSoln} shows the corresponding eigenstrain solution for each sample plotted alongside the corresponding experimental data. Observations from these figures include;
\begin{enumerate}
    \item{Sample 1 shows excellent agreement between the measured stress and the eigenstrain solution.  The same could be said about sample 4 to some degree.  Samples 2 and 3 show some similarity but with significant discrepancy.}
    \item{The form of the eigenstrain distributions is relatively consistent between samples.  Largely speaking, all eigenstrain components were observed to be monotonically increasing with $r$ with positive curvature over most of the range.  The only exception to this was $^s\epsilon_{\theta\theta}^*$ near the outer radius in samples 3 and 4 and $^s\epsilon_{zz}^*$ in sample 3.}
    \item{For all samples, variation of $\epsilon^*_{\theta\theta}$ over the diameter was larger than that of $\epsilon^*_{rr}$ and $\epsilon^*_{zz}$ respectively.}
\end{enumerate}

\subsection{Simultaneous estimation of $d_0$}

The discrepancy between the model and the data in samples 2, 3 and 4 was thought to be due to variation in $d_0$ within the sample.  Values for $d_0$ were not measured directly from these samples; typical destructive methods cannot be used here.  Rather, a single value of $d_0$ was chosen for each sample through a process of ensuring stress-balance.  In reality, $d_0$ is dependent upon various factors such as material composition,  defect density within the crystal structure, and several other factors that can vary throughout the sample.

In an attempt to account for this issue, the above analysis was modified to allow for a variable isotropic $d_0$ as a function of radius that was parameterised by an additional polynomial. The coefficients of this polynomial were chosen through the same least squares optimisation process alongside the eigenstrain coefficients.  Results of this process are shown in Figure \ref{Fit_d0}.

Compared to Figure \ref{AxisymSoln}, we see that the overall fit between the eigenstrain solutions and the measured data is dramatically improved; particularly for samples 2 and 3 but also sample 4.  For sample 1, variation in the reconstructed distribution of $d_0$ was less than the typical level of uncertainty which is of the order of $0.6 \times 10^{-4}$\AA.  This is not the case for the other samples where the reconstructed $d_0$ showed significant variation over the diameter; up to 30 times the level of uncertainty, but still within the bounds of reasonable physical variation.

Unfortunately no data exists to validate these distributions of $d_0$; however the example serves to demonstrate that the constraint of equilibrium (as implemented through the eigenstrain framework) is powerful and may allow for the extraction of otherwise hidden information within experimental data.

\section{Bounded Helmholtz decomposition of eigenstrain}

In a more general setting, we can use a similar process to define an orthogonal range-null decomposition of eigenstrain fields on $\Omega$ with respect to the usual inner product.  As before, null eigenstrains have the form $\epsilon^*=\nabla_sU^*$ for some vector potential $U^*\in H^1(\mathcal{S}^1;\Omega)$. The orthogonal complement is then formed from all $\epsilon^{*\perp}$ such that
\[
\langle \epsilon^{*\perp}, \nabla_sU^* \rangle = \int_\Omega \epsilon^{*\perp} : \nabla_sU^* \hspace{1ex} \mathrm{d}V = 0.
\]
Integrating by parts, we can write this as
\[
\int_{\partial\Omega} (\epsilon^{*\perp} \cdot n) \cdot U^* \hspace{1ex} \mathrm{d}A \quad - \quad \int_{\Omega} \text{Div }(\epsilon^{*\perp}) \cdot U^* \hspace{1ex} \mathrm{d}V \quad = \quad  0.
\]
Given the arbitrary nature of $U^*$, this implies
\begin{equation}
\label{DecompPDE}
\epsilon^{*\perp} \cdot n \Big|_{\partial\Omega}=0 \text{, and } \text{Div }\epsilon^{*\perp} = 0,
\end{equation}
and hence we can write any eigenstrain as an orthogonal decomposition
\begin{equation}
\label{HelmholtzDecomp}
    \epsilon^*=\nabla_sU^*+\epsilon^{*\perp}
\end{equation}
where $\nabla_s U^*$ is a curl-free potential (in the null-space of the eigenstrain problem), and $\epsilon^{*\perp}$ is divergence-free `solenoid' with zero boundary flux on $\partial\Omega$.  This should be recognised as the usual Helmholtz decomposition of tensor fields on a bounded domain \cite{sharafutdinov2012integral}.

As with the axisymmetric example, we can exclude the null-space to dramatically reduce the size of the inverse problem, however implementation of this requires a practical means for paramaterising an arbitrary solenoidal eigenstrain.  In this endeavor, we take inspiration from the concept of a `stress function' \cite{langhaar1954three,pommaret2015airy} which allows for the construction of an arbitrary solenoidal field from a Beltrami potential $\Lambda \in H^2(\mathcal{S}^2;\mathbb{R}^3)$ as
\[
\epsilon^{*\perp}=\mathcal{R}\Lambda=\nabla \times (\nabla \times \Lambda)^T.
\]
This eigenstrain is guaranteed to be divergence-free regardless of the choice of $\Lambda$.  

A simplification based on a Maxwell potential \cite{pommaret2015airy} restricts $\Lambda$ to a diagonal form in terms of three arbitrary scalar fields
\[
\Lambda=\begin{bmatrix}
    \Lambda_x & 0 & 0\\
    0 & \Lambda_y & 0\\
    0 & 0 & \Lambda_z
\end{bmatrix},
\]
each within $H^2(\Omega)$.  From this we can define the divergence-free eigenstrain
\begin{equation}
\label{MaxwellEigenstrain}
\epsilon^{*\perp}=\begin{bmatrix}
            \frac{\partial^2\Lambda_y}{\partial z^2}+\frac{\partial^2\Lambda_z}{\partial y^2}& -\frac{\partial^2\Lambda_z}{\partial x \partial y} & -\frac{\partial^2\Lambda_y}{\partial x\partial z} \\
            -\frac{\partial^2\Lambda_z}{\partial x \partial y} & \frac{\partial^2\Lambda_z}{\partial x^2}+\frac{\partial^2\Lambda_x}{\partial z^2} & -\frac{\partial^2\Lambda_x}{\partial y\partial z} \\
            -\frac{\partial^2\Lambda_y}{\partial x \partial z} & -\frac{\partial^2\Lambda_x}{\partial y\partial z} & \frac{\partial^2\Lambda_x}{\partial y^2}+\frac{\partial^2\Lambda_y}{\partial x^2}
        \end{bmatrix}.
\end{equation}

The inverse problem then becomes the parameterisation of the potentials $\Lambda_x$, $\Lambda_y$ and $\Lambda_z$ in a way that satisfies the zero boundary flux condition, and then the selection of the parameters to match an observed stress field.  In this regard, it should be pointed out that the operator $\mathcal{R}$ has its own null space (i.e. potential fields), however the corresponding solenoidal eigenstrain for a given stress field will be unique.  It should also be noted that the span of $\mathcal{R}$ when restricted to Maxwell potentials covers all divergence-free tensor fields \cite{langhaar1954three}; any solenoidal eigenstrain can be written in the form of \eqref{MaxwellEigenstrain} for some non-unique $\Lambda_x$, $\Lambda_y$ and $\Lambda_z$.

In the following section we provide an example of this process.

\section{Example 2: Additive manufacturing}
\label{AMSection}

\subsection{Sample description and neutron-based stress measurement}

Additive manufacturing refers to a variety of processes that involve 3D `printing' of components through layer-by-layer deposition of material.  Typically, the deposition process for metals involves high temperatures, phase changes and rapid cooling rates that combine to embed eigenstrain into the final component. Variations in this thermo-mechanical process over the printed volume can result in substantial residual stress. Significant interest exists in modeling these processes and understanding the underlying eigenstrain fields that are produced.

In this vein, Figure \ref{SLMCubes} shows a matching pair of additively manufactured $17\times 17 \times 17$ mm cubes printed from Inconel-718 by Selective Laser Melting (SLM).  These cubes were the focus of a study on residual stress created by this manufacturing process which involved direct measurements of the internal strain field using the KOWARI instrument.  Full details of the printing process/parameters and measurement procedure can be found in Wensrich \emph{et al.} \cite{wensrich2024residual}, however the salient features for us here include;

\begin{enumerate}
    \item{The direction of the raster pattern of the laser during printing was rotated by 67$^\circ$ between each layer.  This means that, in terms of the resulting stress distribution, the $x$ and $y$-directions are arbitrary and $z$ is distinct.}
    \item{All components of the stress tensor were measured over a regular $ 8\times 8$ grid on a central cross section of the cube as shown in Figure \ref{SLMCubes}.  This involved measuring normal strain in 9 separate directions at each grid point from which the 6 components of stress could be calculated through a least squares process.  Each individual measurement was based on the relative shift of the (311) diffraction peak in nickel with neutrons of wavelength of $\lambda=1.52$\AA and a $1\times 1 \times 1$ mm gauge volume.  Elastic properties for Inconel-718 were assumed to be $E=208$ GPa, and $\nu=0.28$ for this process.}
    \item{The measured stress distribution was based on a $d_0$ measured from a \emph{comb} sample from an equivalent central section of an identical cube.  This comb sample was cut from the second cube using Electrical Discharge Machining (EDM) and then subject to a series of vertical EDM cuts to form a row of stress-free fingers from which an array $d_0$ of measurements were made from several directions. No spatial or anisotropic variation in $d_0$ was observed.}
\end{enumerate}

\begin{figure}
    \centering
    \includegraphics[width=0.32\linewidth]{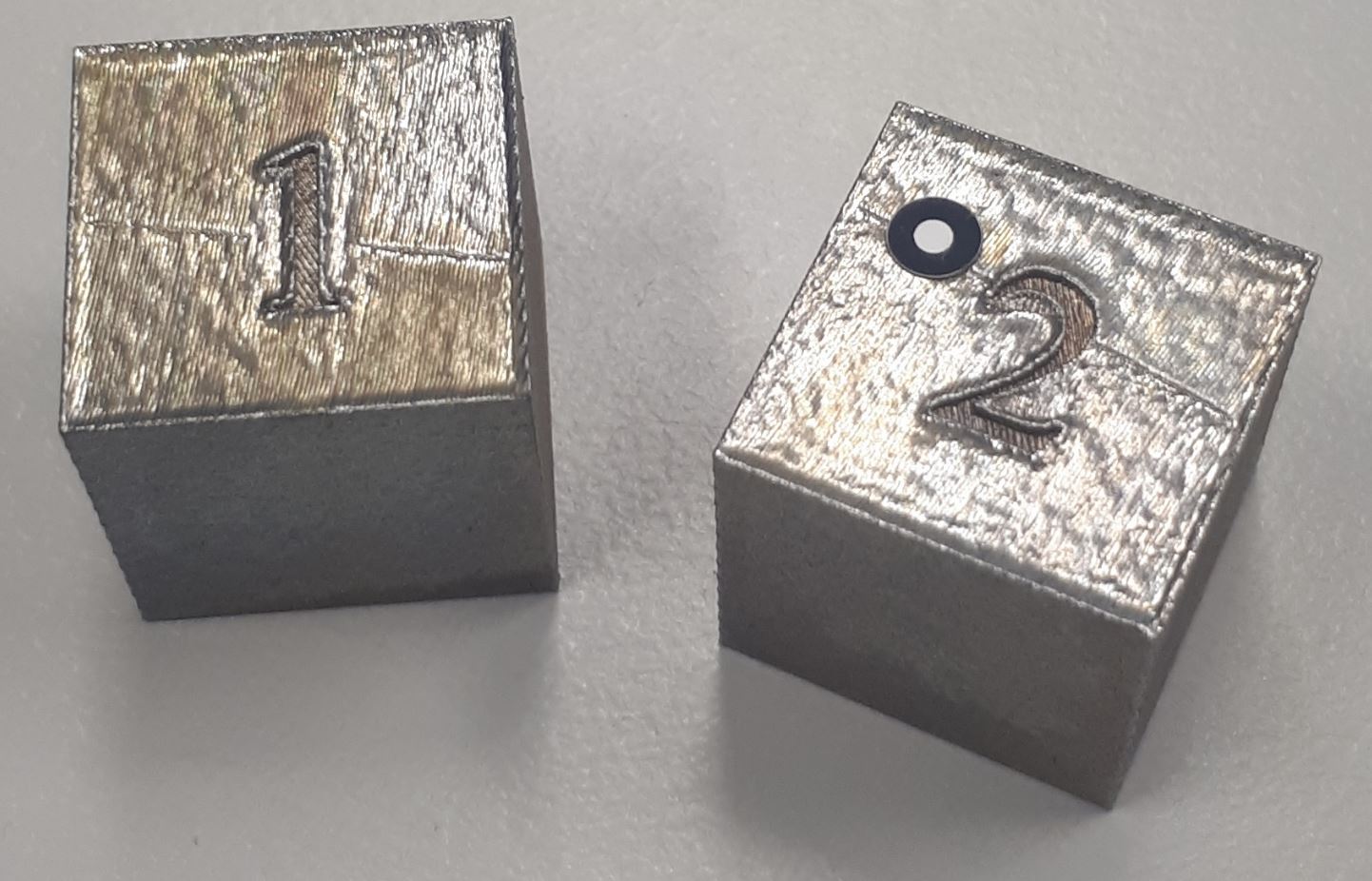}
    \includegraphics[width=0.38\linewidth]{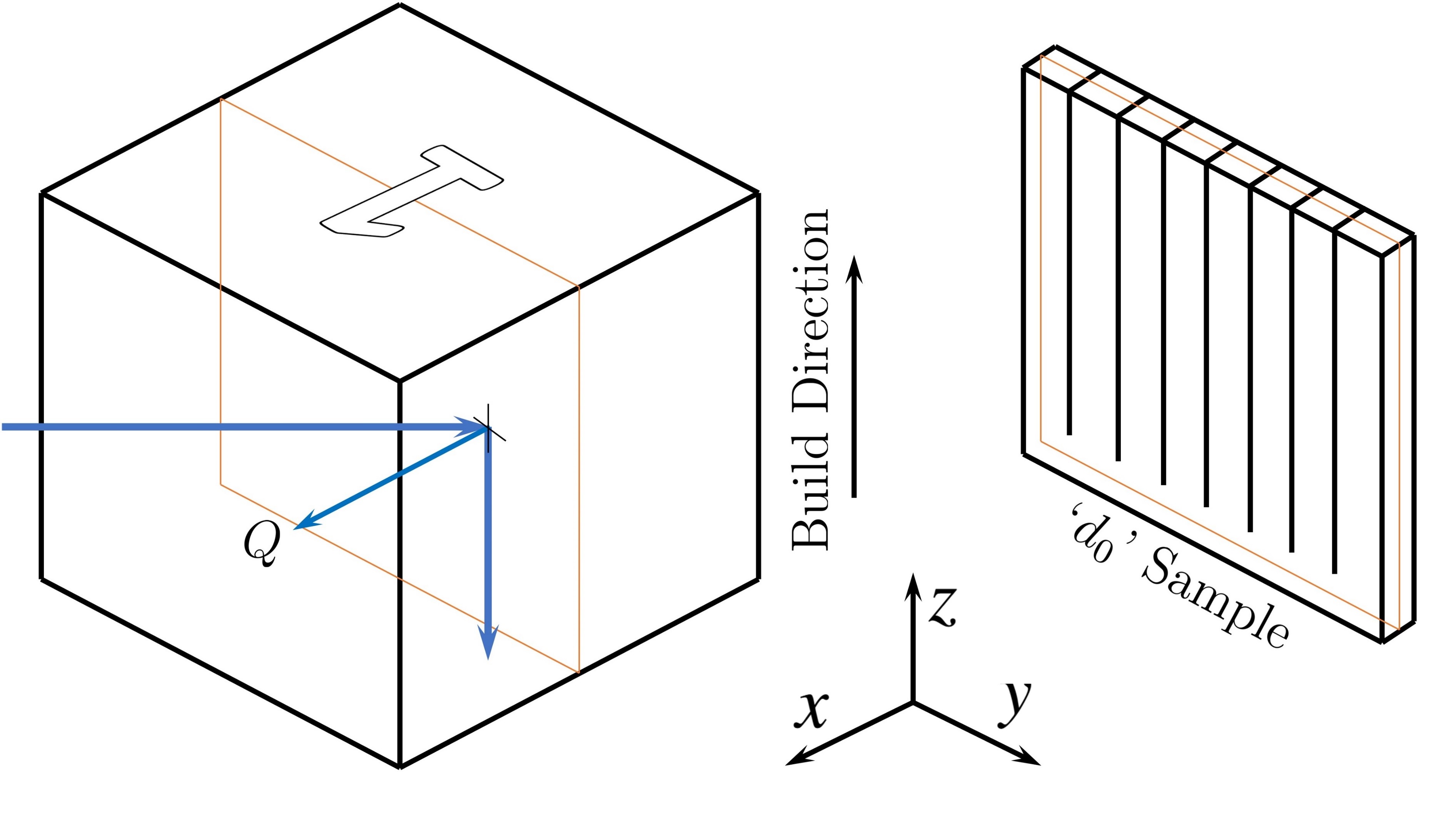}
    \put(-270,82){(a)}
    \put(-145,82){(b)}
    \hspace{0.5ex}
    \includegraphics[width=0.22\linewidth]{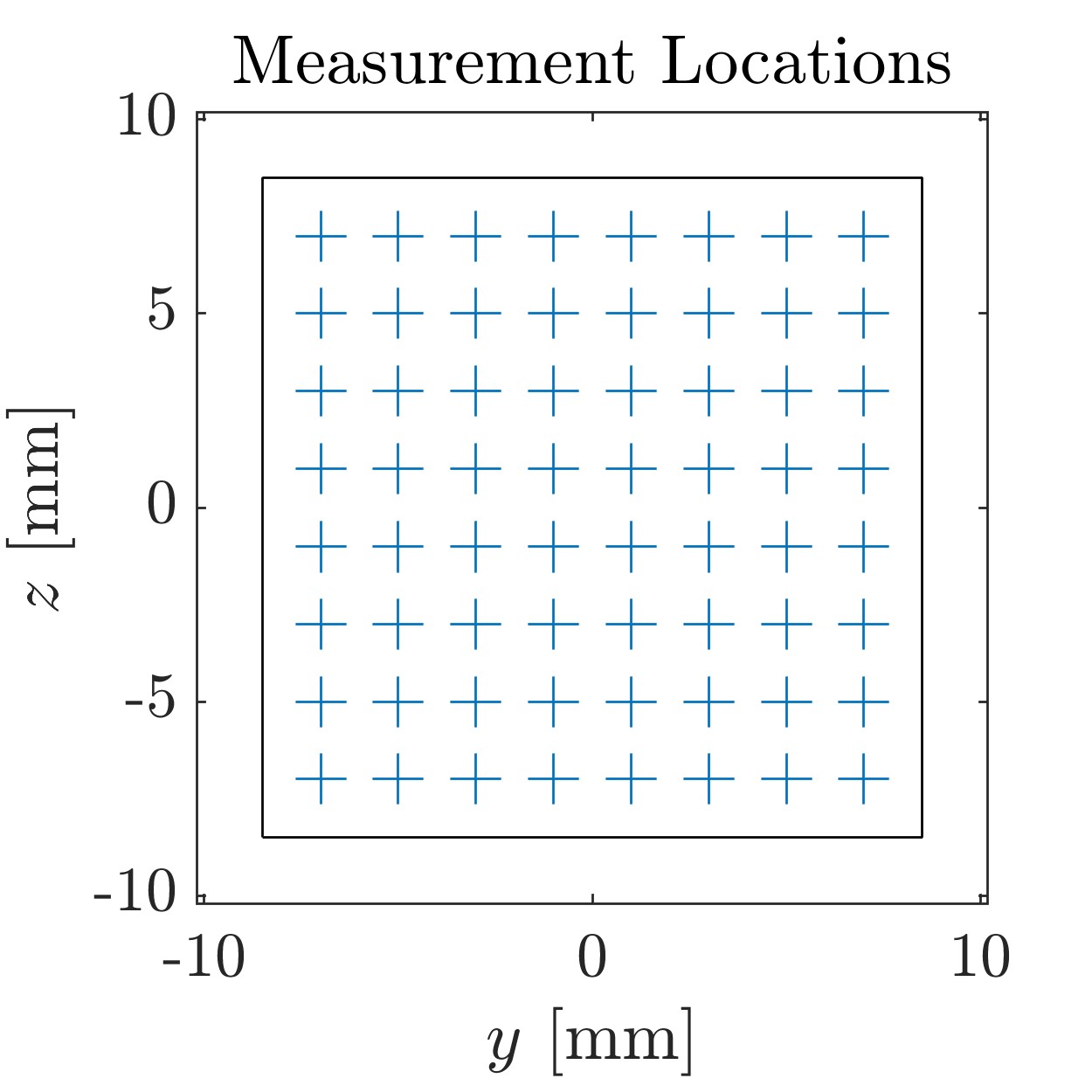}
    \put(-85,82){(c)}
    \caption{(a) A pair of identical $17\times 17 \times 17$ mm Inconel cubes produced using Selective Laser Melting \cite{wensrich2024residual}  (b) Cube 1 was the subject of residual stress measurements over a central cross section using the KOWARI strain diffractometer at ANSTO.  The schematic shows the geometry for the measurement of the $\epsilon_{xx}$ component.  Cube 2 was dissected using Electro-Discharge Machining (EDM) to provide a stress-free `$d_0$' comb. (c) Locations of the experimental measurements over the cross section.}
    \label{SLMCubes}
\end{figure}

The measured residual stress distribution is shown in Figure \ref{SLMCubeStress}a.

\subsection{Direct modeling of stress}

We begin by modeling the stress distribution in the cube directly using a Maxwell potential.  Similar to before, this implies that any divergence-free stress field can be written in the form
\begin{equation}
\label{MaxwellStress}
\sigma=\mathcal{R}\Lambda=\begin{bmatrix}
            \frac{\partial^2\Lambda_y}{\partial z^2}+\frac{\partial^2\Lambda_z}{\partial y^2}& -\frac{\partial^2\Lambda_z}{\partial x \partial y} & -\frac{\partial^2\Lambda_y}{\partial x\partial z} \\
            -\frac{\partial^2\Lambda_z}{\partial x \partial y} & \frac{\partial^2\Lambda_z}{\partial x^2}+\frac{\partial^2\Lambda_x}{\partial z^2} & -\frac{\partial^2\Lambda_x}{\partial y\partial z} \\
            -\frac{\partial^2\Lambda_y}{\partial x \partial z} & -\frac{\partial^2\Lambda_x}{\partial y\partial z} & \frac{\partial^2\Lambda_x}{\partial y^2}+\frac{\partial^2\Lambda_y}{\partial x^2}
        \end{bmatrix},
\end{equation}
where the three scalar potentials $\Lambda_x$, $\Lambda_y$ and $\Lambda_z$ are now considered to be `stress functions'.  

The cubic geometry of the sample and details of the printing process imply a number of symmetries which can be exploited as constraints on these stress functions.  For example, the arbitrary nature of coordinates in the build plane implies that only two distinct scalar potentials are required.  We denote these as 
\[
\Lambda_{||}=\Lambda_x=\Lambda_y \quad \text{and} \quad \Lambda_\perp = \Lambda_z,
\]
with both $\Lambda_{||}$ and $\Lambda_\perp$ symmetric under the exchange of $x$ and $y$.   The mirror symmetries about the $x=0$, $y=0$ and $z=0$ planes also imply that both of these functions are even in $x$, $y$ and $z$.  Furthermore, the zero boundary traction/flux constraint \eqref{NoTraction} implies a number of conditions of the form
\begin{align*}
\frac{\partial^2\Lambda_{||}}{\partial z^2}+\frac{\partial^2\Lambda_\perp}{\partial y^2}\Big|_{x=L} &= 0\\
\frac{\partial^2\Lambda_{||}}{\partial x^2}+\frac{\partial^2\Lambda_{||}}{\partial y^2}\Big|_{z=L} &= 0\\
\frac{\partial^2\Lambda_\perp}{\partial x\partial y} \Big|_{x=L} &= 0\\
\frac{\partial^2\Lambda_{||}}{\partial x\partial z} \Big|_{x=L} &= 0\\
\frac{\partial^2\Lambda_{||}}{\partial x\partial z} \Big|_{z=L} &= 0,
\end{align*}
where $L$ is the size of the cube (i.e. $x\in[-L,L]$, $y\in[-L,L]$, and $z\in[-L,L]$). Note that a number of similar statements can be obtained through exchange of $x$ and $y$.  All of these conditions together imply that $\Lambda$ is factored by 
\[
\Phi=\Big( \big(\tfrac{x}{L}\big)^2-1\Big)^2\Big( \big(\tfrac{y}{L}\big)^2-1\Big)^2\Big( \big(\tfrac{z}{L}\big)^2-1\Big)^2.
\]

Hence, relying on the fundamental theorem of symmetric polynomials \cite{blum2017fundamental}, we can write general polynomials for $\Lambda_{||}$ and $\Lambda_{\perp}$ as
\begin{align*}
\Lambda_{||}&=\Phi\Big(a_0+a_1\big(\tfrac{z}{L}\big)^2+a_2\big(\tfrac{z}{L}\big)^4+\dots\Big)\\
\Lambda_{\perp}&=\Phi\Big(b_0+b_1\big(\tfrac{z}{L}\big)^2+b_2\big(\tfrac{z}{L}\big)^4+\dots\Big),
\end{align*}
where the $a$ and $b$ coefficients are symmetric polynomial functions of the form
\begin{center}
\vspace{-4ex}
\begin{minipage}{0.4\textwidth}
	\begin{align*}
	a_i(x,y)=&a_{i0}+\\
			&a_{i1}e_1+\\
			&a_{i2}e_1^2+a_{i3}e_2+\\
			&a_{i4}e_1^3+a_{i5}e_1e_2+\\
            &a_{i6}e_1^4+a_{i7}e_1^2e_2+a_{i8}e_2^2+\dots
	\end{align*}
\end{minipage}
\begin{minipage}{0.4\textwidth}
	\begin{align*}
	b_i(x,y)=&b_{i0}+\\
			&b_{i1}e_1+\\
			&b_{i2}e_1^2+b_{i3}e_2+\\
			&b_{i4}e_1^3+b_{i5}e_1e_2+\\
			&b_{i6}e_1^4+b_{i7}e_1^2e_2+b_{i8}e_2^2+\dots
	\end{align*}
\end{minipage}
\end{center}
with $e_1$ and $e_2$ the elementary symmetric polynomials
\[
e_1=\tfrac{1}{L^2}(x^2+y^2) \quad \text{and} \quad e_2=\tfrac{1}{L^4}x^2y^2.
\]

Through this parameterisation, we can form a least-squares best-fit to the observed stress through the following process;

Truncating the polynomials for $\Lambda_{||}$ and $\Lambda_\perp$ at a given order provides a set of basis vectors to describe stress.  For example, noting that $\Phi$ is of order 12,  order 24 polynomials (i.e. maximum order of 8 for $x$, $y$ and $z$ individually) for $\Lambda_{||}$ and $\Lambda_\perp$ can be written in terms of 24 basis functions 
 with 24 coefficients; $3 \times 4$ $a$'s and $3 \times 4$ $b$'s.  Each of these basis vectors for $\Lambda$ corresponds to a stress field defined over the whole cube via \eqref{MaxwellStress}, and in particular, defines the 6 unique components stress at the $N=8\times 8$ measurement locations from the experiment.  From this we form the following linear system for polynomials of order 24;
\[
A_{[6N \times 24]}
\begin{bmatrix}
    a\\
    b
\end{bmatrix}_{[24 \times 1]}=\sigma_{[6N \times 1]}
\]
where the right hand side is a vector containing all of the experimental measurements, and the columns of $A$ are formed by corresponding values from each basis vector at the same locations.  The least-squares solution for the coefficients (i.e. minimum $L^2$ norm) can then be computed via the Moore-Penrose pseudo-inverse of $A$.  The results of this process in terms of components of the fitted stress distribution over the central cross section are shown in Figure \ref{SLMCubeStress}b.  Also shown in Figure \ref{SLMCubeStress}c and d are profiles of the normal components along the $y$ and $z$ axes showing a direct comparison of measure stresses with the fitted Maxwell distribution.  Excellent agreement is observed within the confines of the measured data, however outside of this there is some minor evidence of Runge's phenomenon near the boundary.

\begin{figure}[!htb]
    	\centering
        \includegraphics[width=0.45\linewidth]{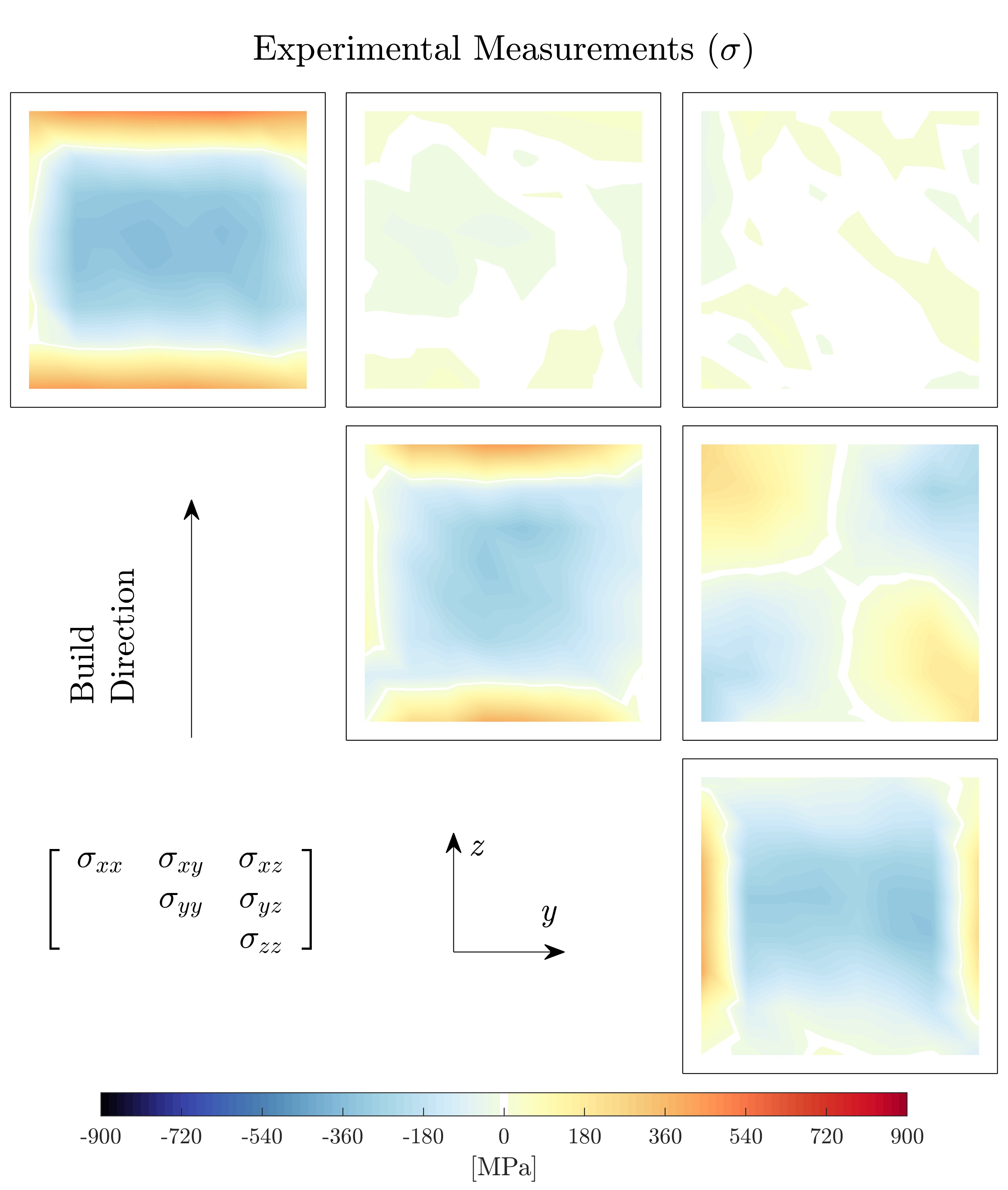}
        \put(-170,195){(a)}
        \includegraphics[width=0.45\linewidth]{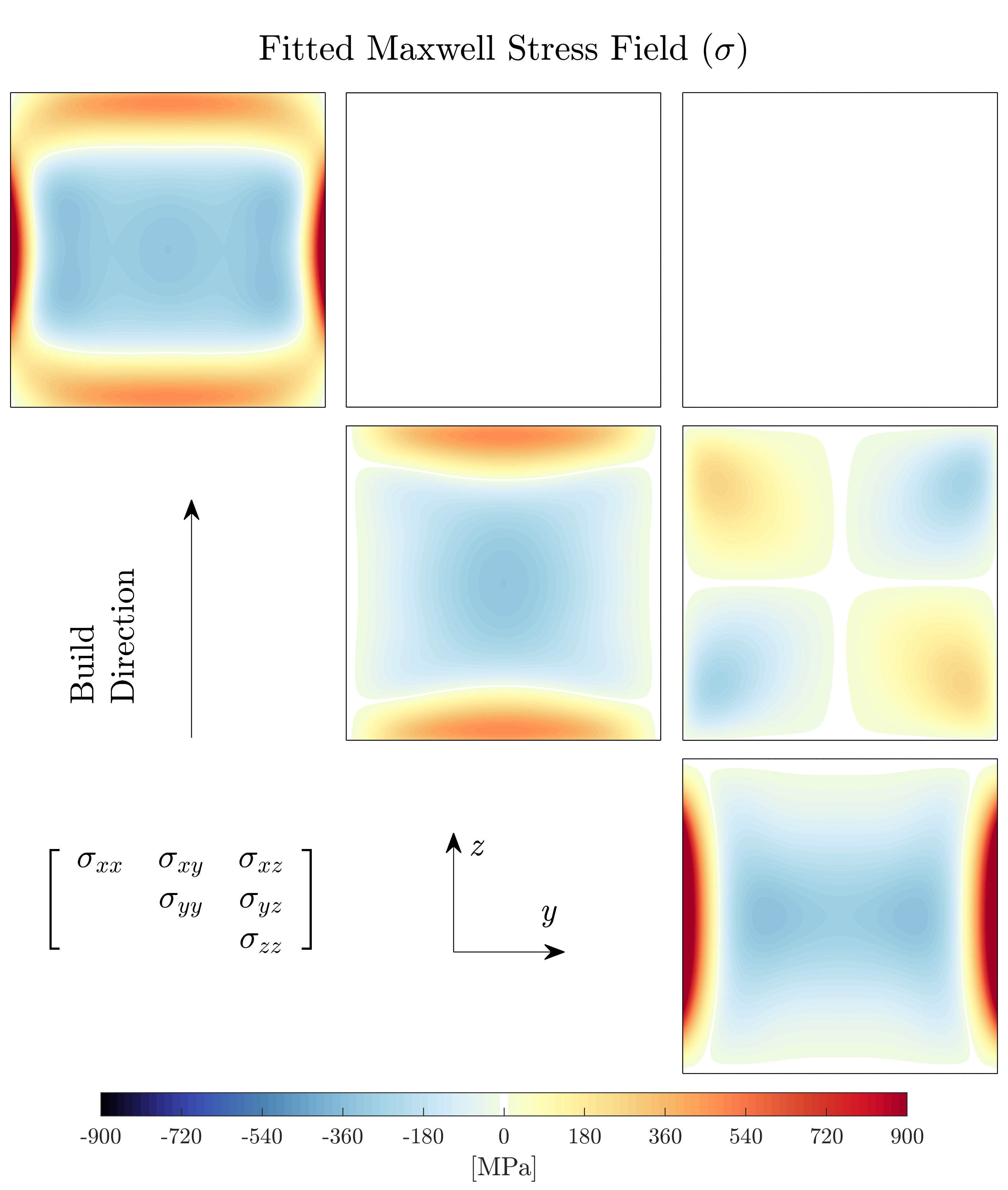}
        \put(-170,195){(b)}\\
        \includegraphics[width=0.45\linewidth]{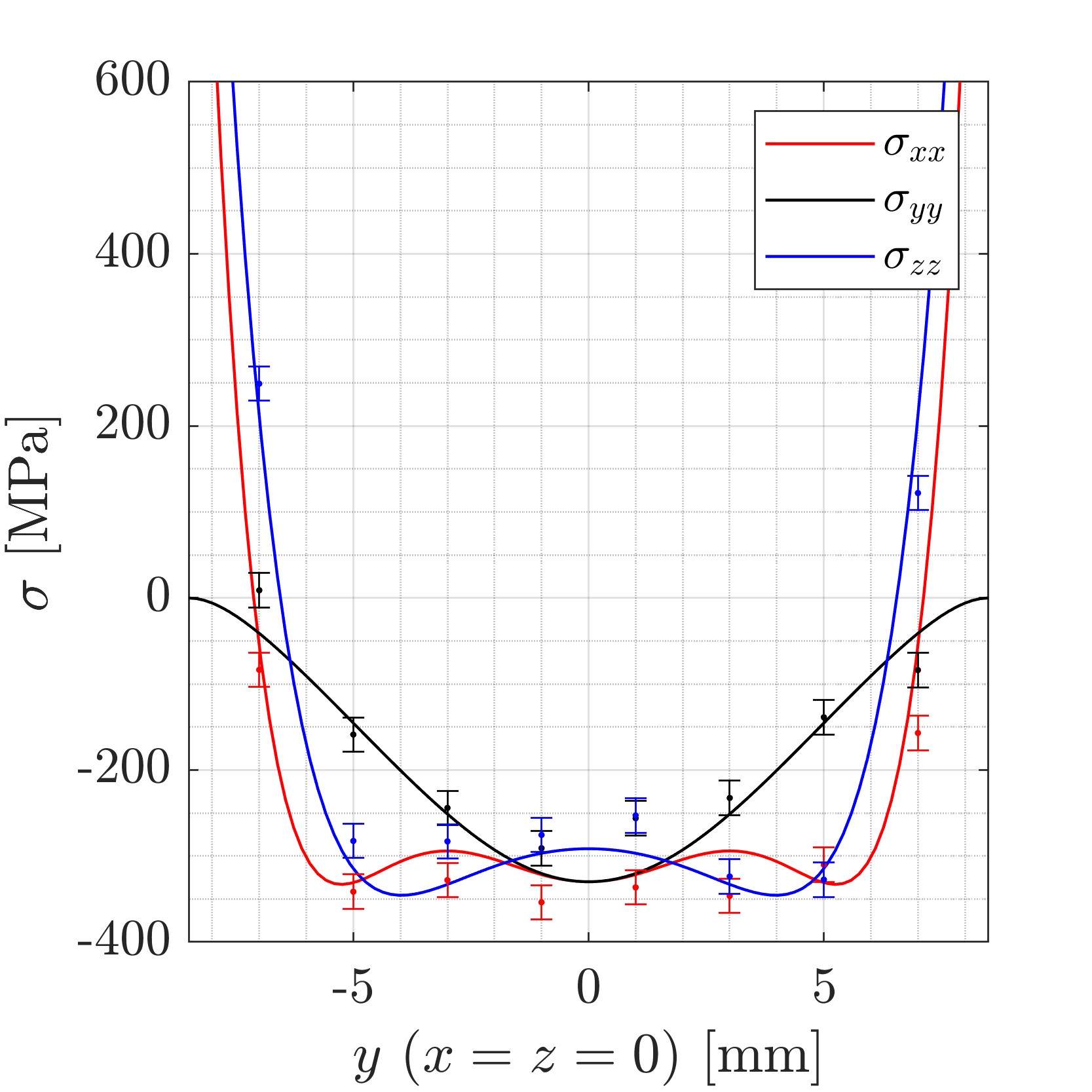}
        \put(-172,155){(c)}
        \includegraphics[width=0.45\linewidth]{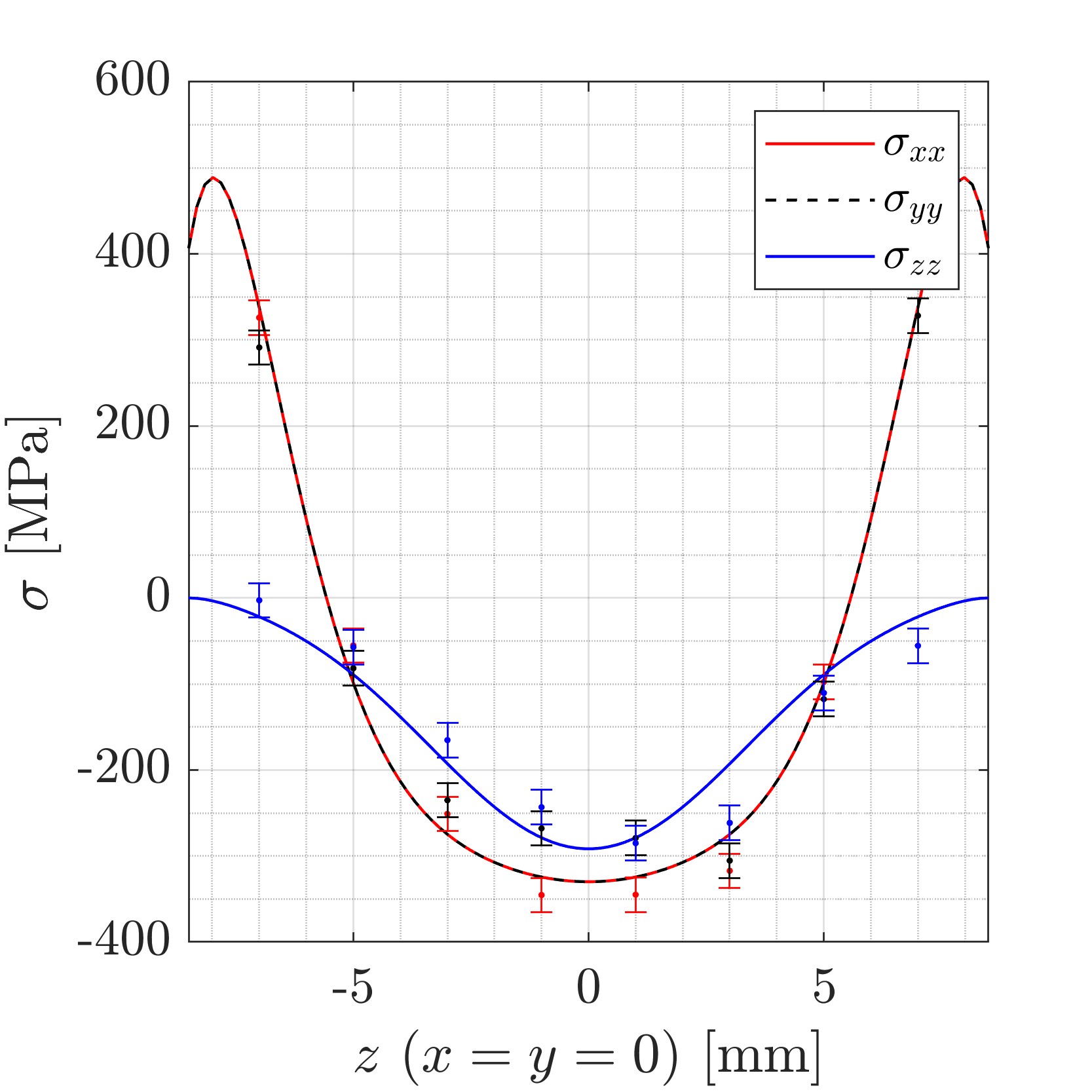}
        \put(-172,155){(d)}
    	\caption{Residual stress in the $17\times 17 \times 17$ mm additively manufactured Inconel cube shown in Figure \ref{SLMCubes}a. (a) Experimental data from the KOWARI instrument measured over a regular $8\times 8$ grid with its extremities inset from the boundary by 1 mm (from \cite{wensrich2024residual}).   The colourmap shown is an interpolation of this measured data without extrapolation (the black boarder represents the sample boundaries). (b) A fitted stress distribution based on a polynomial Maxwell potential of order $3 \times 8$ in $x$, $y$ and $z$. (c) and (d) Profiles of residual stress along the $y$ and $z$ axes respectively; experimental measurements shown as error bars, the fitted Maxwell distribution shown as solid lines.  Note that symmetry dictates that shear components should be zero along these lines.}
        \label{SLMCubeStress}
\end{figure}

As discussed in Section \ref{Sec:Range-NullDecomp}, this fitted stress field directly defines a trivial solution to the inverse eigenstrain problem of the form $\epsilon^*=\epsilon^{*\perp_C}=-S:\sigma$.  This trivial solution is shown in Figure \ref{SLMEigenstrain}a.

\begin{figure}[!htb]
    	\centering
        \includegraphics[width=0.45\linewidth]{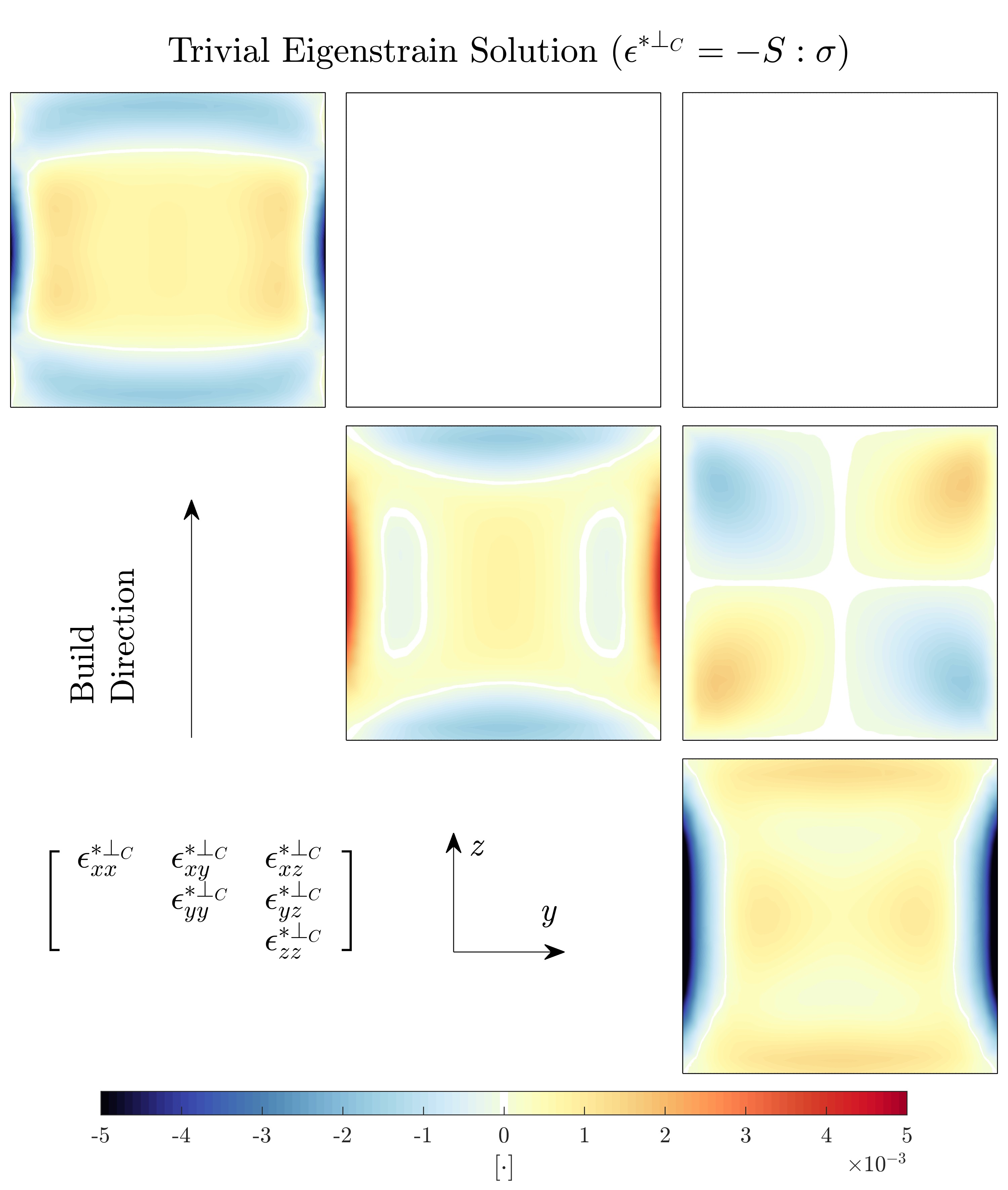}
        \put(-170,195){(a)}
        \includegraphics[width=0.45\linewidth]{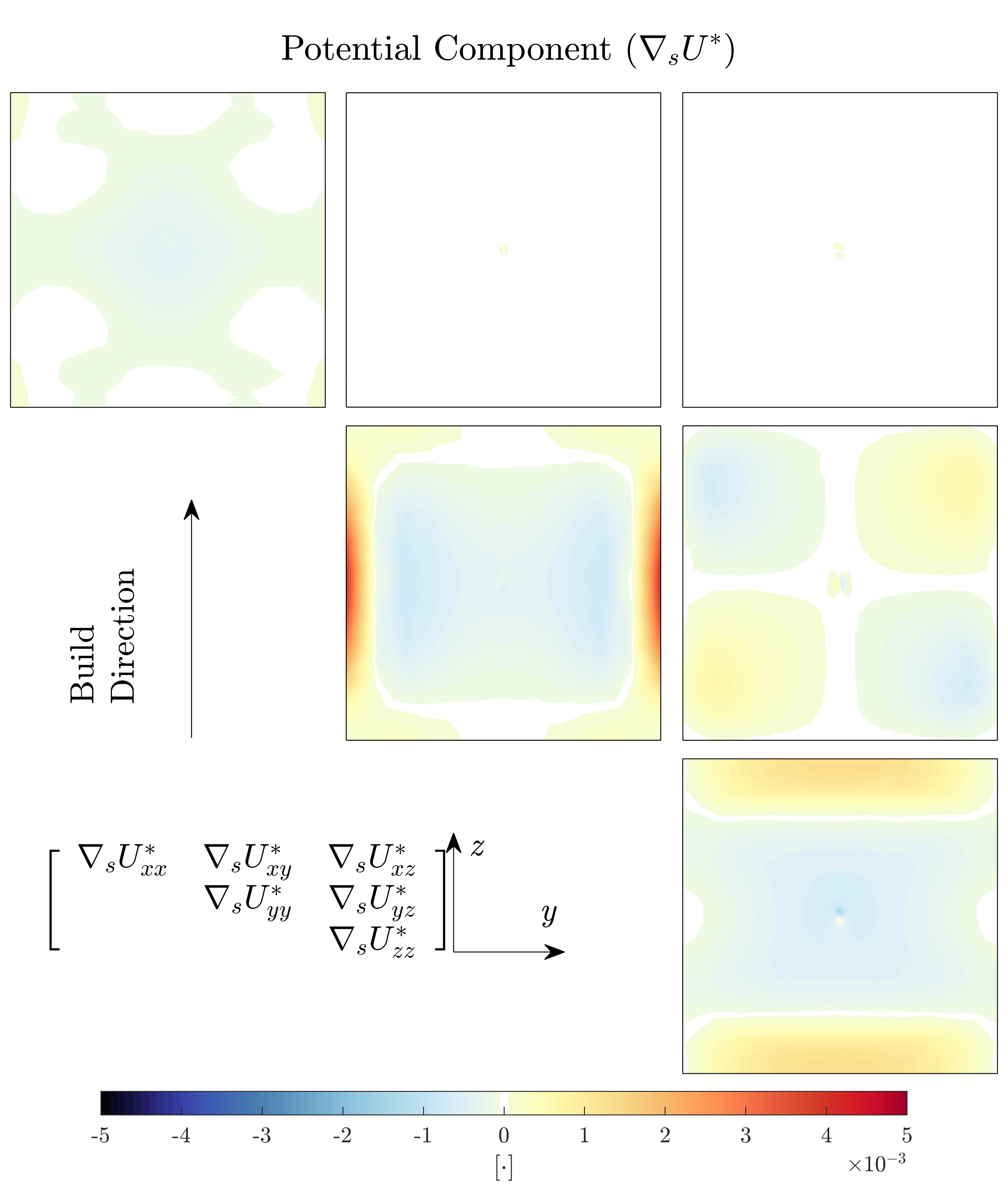}
        \put(-170,195){(b)}\\
        \includegraphics[width=0.45\linewidth]{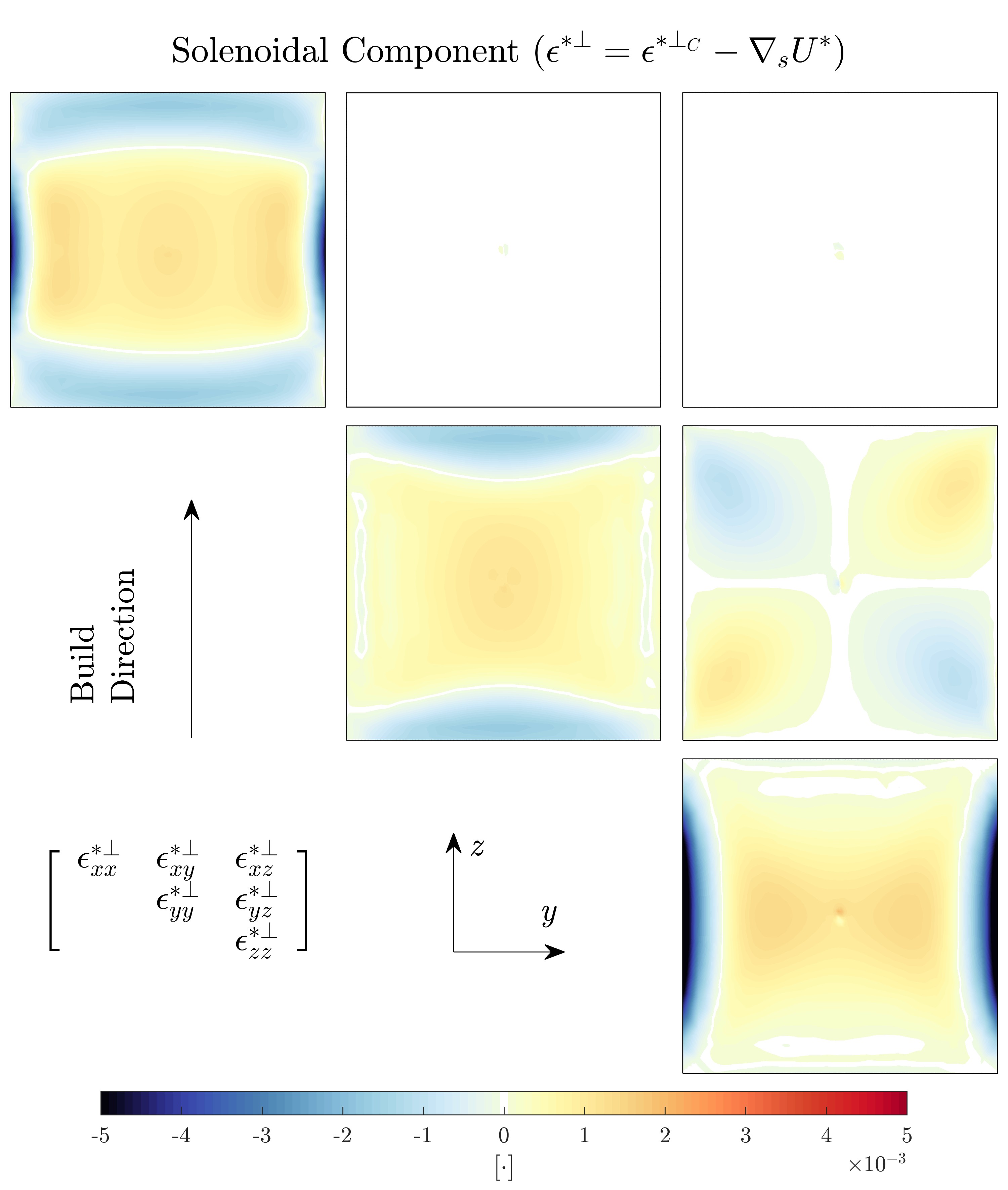}
        \put(-170,195){(c)}
        \includegraphics[width=0.45\linewidth]{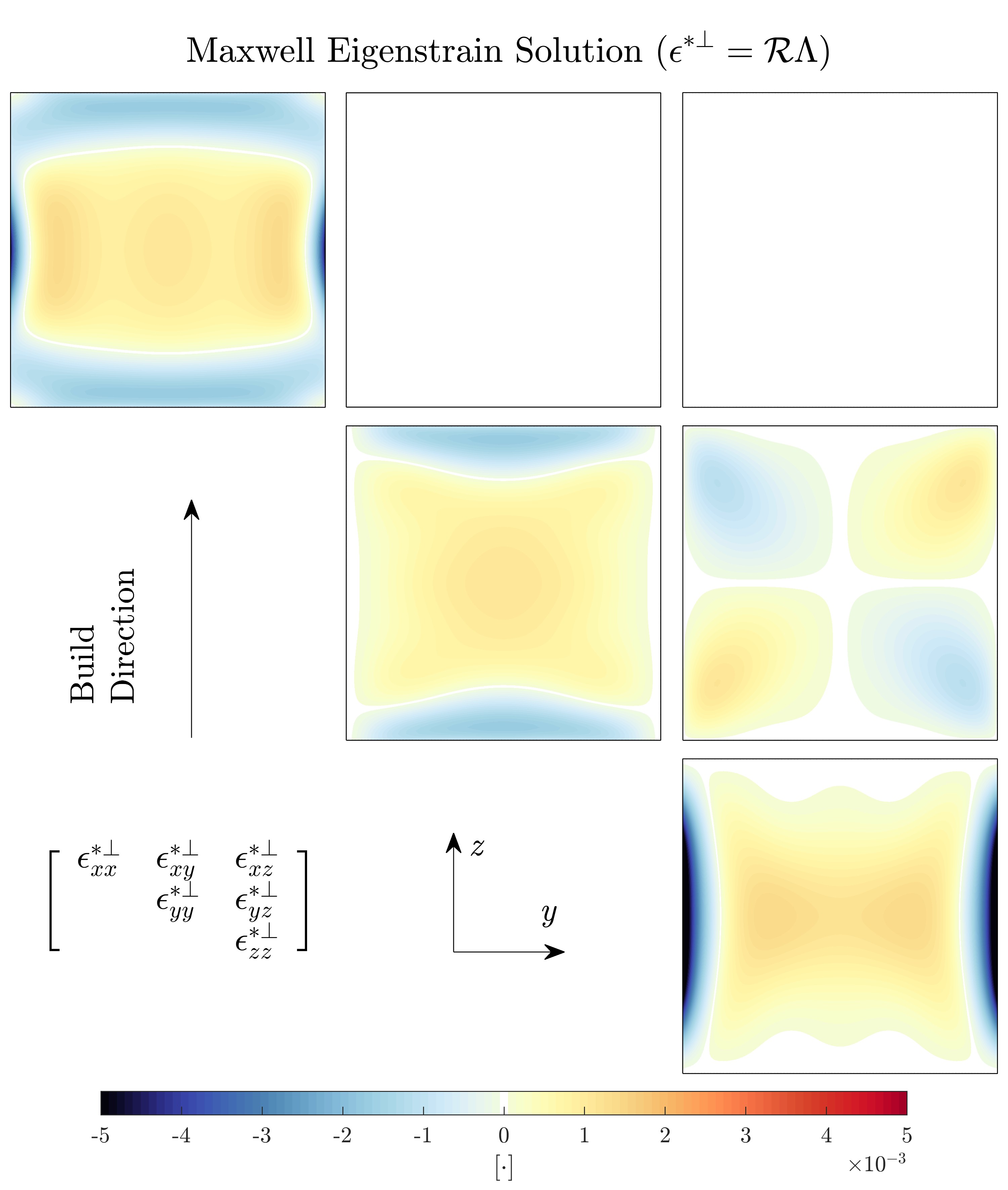}
        \put(-170,195){(d)}
    	\caption{Computed eigenstrain corresponding to the measured stress within the SLM cube.  In each case, the distribution over the central cross section is shown.  (a) The trival eigenstrain solution $\epsilon^{*\perp_C}=-\epsilon$ calculated from the fitted Maxwell stress distribution shown in Figure \ref{SLMCubeStress}b using Hooke's law.  (b) The potential component of the trivial eigenstrain $\nabla_sU^*$ calculated as a finite element solution to \eqref{DecompPDE1}.  (c) The corresponding solenoidal component of the trivial eigenstrain solution $\epsilon^{*\perp}=\epsilon^{*\perp_C}-\nabla_sU^*$.  (d) The Maxwell inverse-eigenstrain solution chosen from the null-complement as the least-squares solution corresponding to the observed stress field.}
        \label{SLMEigenstrain}
\end{figure}

\subsection{Inverse eigenstrain model}

Noting that divergence-free and zero boundary flux eigenstrains can also be represented by the same set of polynomials, we can use a similar process to solve the inverse eigenstrain problem with solutions restricted to the null-compliment. This involves a similar process to before, with the exception that the columns of $A$ are now formed by forward-calculated solutions for stress corresponding to each individual basis function, now interpreted as an eigenstrain field.

Practically, this can be achieved using a finite element model within the MATLAB PDE solver toolbox.  Our implementation consisted of 6716 quadratic finite elements generated within the cube boundary using the in-built MATLAB mesh-generation tool.  The right-hand-sides of \eqref{EquilibriumEigenstrain} and \eqref{BoundaryCondition} were implemented as non-constant coefficients and boundary tractions in the PDE solver via anonymous functions.

Figure \ref{SLMEigenstrain}d shows the resulting inverse eigenstrain solution depicted over the central cross section of the cube.  The resulting stress distribution is not shown, but is effectively identical to that shown in Figure \ref{SLMCubeStress}b.  While this Maxwell eigenstrain solution is similar in magnitude to the trivial eigenstrain (i.e. Figure \ref{SLMEigenstrain}a), the two distributions are clearly not the same.  This is particularly noticeable around the $y=\pm L$ and $z=\pm L$ boundaries.  The difference between these two solutions is in the form of a potential which can be found by decomposing the trivial solution as follows;

From the decomposition \eqref{HelmholtzDecomp}, we can write
\[
\epsilon^{*\perp_C}=\nabla_sU^*+\epsilon^{*\perp},
\]
where $\epsilon^{*\perp}$ is divergence-free with no boundary flux.  Substituting into \eqref{DecompPDE}, we obtain a boundary value problem for $\nabla_s U^*$ of the form
\begin{equation}
\label{DecompPDE1}
\text{Div }(\nabla_s U^*)=\text{Div }(\epsilon^{*\perp_C}),
\text{ where } \hspace{1ex}
\nabla_s U^*\cdot n\Big|_{\partial\Omega}=\epsilon^{*\perp_C}\cdot n\Big|_{\partial\Omega}.
\end{equation}

This can be solved numerically in much the same way as the forward eigenstrain problem using the same finite element model as before (with $C$ replaced by the identity).  Figure \ref{SLMEigenstrain}b shows the resulting potential component with the corresponding solenoidal component shown in Figure \ref{SLMEigenstrain}c.  Apart from a small numerical artifact at the origin, this solenoidal component is essentially identical to the Maxwell inverse eigenstrain solution (i.e. Figure \ref{SLMEigenstrain}d; the inverse eigenstrain solution from within the null-compliment).

So from this perspective, what can be said about the inverse eigenstrain approach?  In some sense it provides a framework through which we can model stress fields, but on the other hand, the existence of the trivial solution implies that no new information is gained by solving the problem.  This may not be the case if additional information is available; e.g perhaps there are circumstances where the eigenstrain tensor is known to be hydrostatic, uniaxial, principal in certain directions, or some other prescribed form.  In these cases, perhaps the inverse problem can shed light on magnitudes, but it is unlikely that it can test the underlying assumptions.  Fundamentally, the fact that an eigenstrain solution fits an observed stress field does not imply it is the physically reality.

\section{A brief note on strain tomography based on the Longitudinal Ray Transform}

Bragg-edge tomography is an experimental technique for imaging elastic strain based on energy-resolved neutron transmission measurement.  A full description can be found elsewhere (e.g. \cite{Tomo:abbey2012neutron,Tomo:busi2022bragg,Tomo:gregg2017tomographic,Tomo:gregg2018tomographic,Tomo:hendriks2017bragg,Tomo:hendriks2019tomographic,Tomo:hendriks2019implementation,Tomo:kirkwood2015neutron,Tomo:lionheart2015diffraction,Tomo:wensrich2024direct,Tomo:wensrich2024general,Tomo:zhu2023bragg}), but briefly, the technique is focused on tomographic reconstruction of an elastic strain field from a set of multiple lower-dimensional projections. 

\begin{figure}
    \centering
    \includegraphics[width=0.5\linewidth]{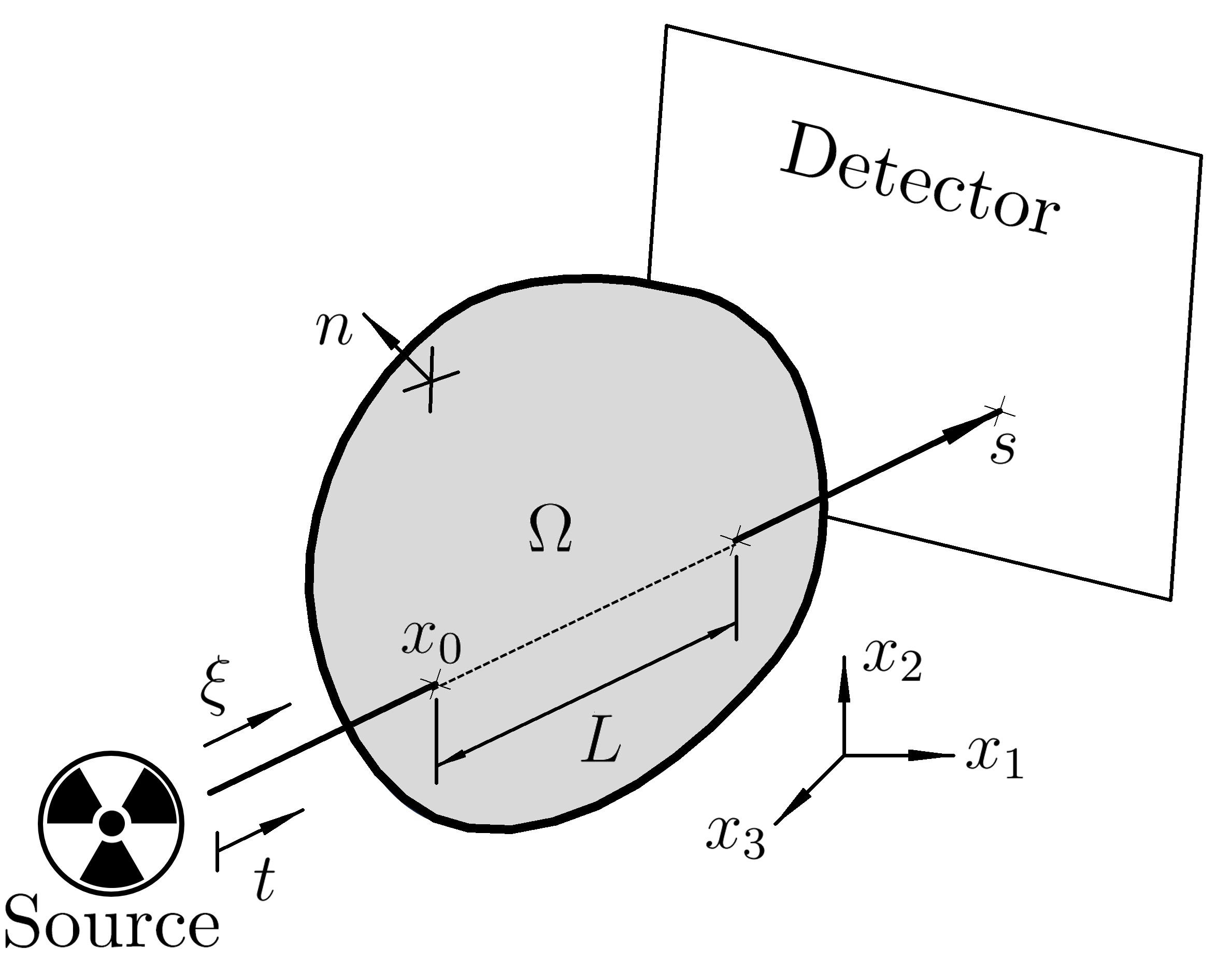}
    \caption{Geometry of the Longitudinal Ray Transform from the perspective of strain tomography on the bounded object $\Omega$.}
    \label{fig:LRTGeom}
\end{figure}

In three-dimensions (see Figure \ref{fig:LRTGeom}) these projections are in the form of two-dimensional images where the value at each pixel is defined by the Longitudinal Ray Transform (LRT) of the elastic strain field\footnote{Note that experimental measurements are usually in the form of averages along the ray path through the interior of the sample.  i.e.
\[
\bar\epsilon(s,\xi)=\frac{1}{L}\int_{-\infty}^{\infty} \epsilon_{ij}(s+t\xi)\xi_i\xi_j \mathrm{d}t.
\]
From a known geometry, it is a simple process to convert these averages to the LRT.
}
\[
I\epsilon(s,\xi)=\int_{-\infty}^{\infty} \epsilon_{ij}(s+t\xi)\xi_i\xi_j \mathrm{d}t.
\]
The associated strain tomography problem focuses on the reconstruction of $\epsilon$ from measurements of $I\epsilon$ (i.e. the inversion of the LRT).  However, this task is hindered by a significant issue in the form of a large null-space for $I$ consisting of all potential fields of the form $\nabla_sU$ with $U\big|_{\partial\Omega}=0$ \cite{sharafutdinov2012integral}.  Any field of this form cannot be observed by the LRT.  

As with our eigenstrain problem, this forms a natural orthogonal decomposition for elastic strain of the form
\[
\epsilon=\nabla_sU+{^s\epsilon}
\]
where $^s\epsilon$ is a divergence-free solenoidal component corresponding to a potential component satisfying $U\big|_{\partial\Omega}=0$.  Courtesy of Sharafutdinov \cite{sharafutdinov2012integral}, an explicit reconstruction formula exists for $^s\epsilon\in L^2(\mathcal{S}^2;\mathbb{R}^3)$ of the form \cite{Tomo:wensrich2024general}
\begin{equation}
{^s}\epsilon = \frac{1}{4\pi^2}(- \Delta)^{1/2}\Big[4 - (\text{\bf{I}}-\Delta^{-1}\nabla_s^2) \textnormal{tr} \Big] \mathcal{X}^* \xi\otimes\xi I\epsilon,
\label{3DInvFormula}
\end{equation}
where $\text{\bf{I}}$ is the rank-2 identity, $\textnormal{tr}$ is the trace operator, $\Delta$ is the Laplacian operator with fractional and negative powers implemented through the Fourier domain and $\mathcal{X}^*$ represents the standard back-projection operator as used in scalar x-ray tomography
\[
\mathcal{X}^*g(x)=\int_{\mathbb{S}^2} g(x,\xi) \mrd\xi.
\]

Armed with our earlier analysis of the eigenstrain problem, we can now recognise this reconstructed component in a new light.  Given that $-\epsilon$ is a trivial solution to the inverse eigenstrain problem and $\nabla_sU$ is in the associated null space, $-^s\epsilon$ must also be a valid solution to the eigenstrain problem.  In other words, the stress field within $\Omega$ can be reconstructed by solving the forward eigenstrain problem for an eigenstrain of $\epsilon^*=-{^s\epsilon}$.

\section{Conclusions}

The eigenstrain concept is central to the analysis of residual stress in real physical systems.  In a practical sense, real physical objects are routinely subject to eigenstrains in the form of plastic deformation, thermal expansion, and/or dilation/shrinkage due to phase change, etc.  From this perspective, the forward problem is fundamental in terms of modeling the behaviour of real systems and mechanical processes.

The practicality of the inverse problem is more nuanced.  The inherently ill-posed nature of this problem means that a great deal of care should be exercised when drawing conclusions from reconstructed eigenstrain fields.  Without detailed prior knowledge, in a general sense the best we can hope for is a reconstruction that differs from reality by some unknown potential field that can never be observed.  The simplest of these reconstructions is trivially related to the known stress field, and from this perspective it is clear that a general solution to the inverse problem does not generate any new information.

That is not to say that the inverse problem has no practical utility.  As demonstrated by our example involving ancient Roman medical tools, equilibrium imposes a strong constraint that can be applied through the eigenstrain framework to potentially uncover other information such as the $d_0$ distribution within a sample.  In some circumstances, this may be the only non-destructive means available.  There are also cases where prior knowledge may exist regarding the form of the eigenstrain within a sample.  In these scenarios, the inverse problem may be able to inform on reality, however this is far from a general statement.

Outside of these direct applications, we also observed that there is a great deal of similarity between the null space of the eigenstrain problem and the null space of the LRT.  This similarity has allowed us to draw a link between the solenoidal component of elastic strain recovered by LRT inversion formulas due to Sharafutdinov \cite{sharafutdinov2012integral} and inverse eigenstrain solutions.  Through this link, we have provided a potential avenue for the reconstruction of stress within physical objects from the LRT of elastic strain.

\section{Acknowledgements}

This work is supported by the Australian Research Council through a Discovery Project Grant (DP170102324). Access to the KOWARI diffractometer was made possible through an ANSTO Program Proposal PP6050 and regular beamtime proposals P14244 and P16860.  Additional support from AINSE Limited was also provided during the experimental work.  

The authors wish to acknowledge the assistance of the RD Milns Antiquities Museum at the University of Queensland in this work; particularly for the loan of the Roman medical tools.

We also acknowledge the Additive Manufacturing Research Laboratory (AMRL) at RISE IVF in Sweden for manufacturing the SLM specimens.

CM Wensrich, S Holman and WBL Lionheart would like to thank the Isaac Newton Institute for Mathematical Sciences at the University of Cambridge for support and hospitality during the program `Rich and Non-linear Tomography: A Multidisciplinary Approach'.  Foundational work on this paper was undertaken as a part of this program. The RNT program was supported by EPSRC grant number EP/R014604/1.  

While in Cambridge, CM Wensrich received support from the Simons Foundation and would also like to thank Clare Hall for their support and hospitality over this period.

\bibliography{references}

\end{document}